\newtheorem{thm}{Theorem}[section]
\newtheorem{lem}[thm]{Lemma}
\newtheorem{prop}[thm]{Proposition}
\theoremstyle{definition}
\newtheorem{defn}[thm]{Definition}
\newtheorem{rem}[thm]{Remark}
\numberwithin{equation}{section}
\newtheorem{ack}{Acknowledgements}
\begin{document}
\title[Variational formulas of higher order mean curvatures]{Variational formulas of higher order mean curvatures}
\author[L. Xu]{Ling Xu}\author[J. Q. Ge]{Jianquan Ge}
\thanks{The project is partially supported by the NSFC (No.11001016),
the SRFDP, and the Program for Changjiang Scholars and Innovative
Research Team in University.}
\address{School of Mathematical Sciences, Laboratory of Mathematics and Complex Systems, Beijing Normal
University, Beijing 100875, China} \email{xuling6725@163.com}
\email{jqge@bnu.edu.cn}
\thanks{The second author is the corresponding author.}

 \subjclass[2000]{ 53C42, 53C40.}
\date{}
\keywords{$2p$-minimal, mean curvature, austere submanifold.}
\begin{abstract}
In this paper, we establish the first variational formula and its
Euler-Lagrange equation for the total $2p$-th mean curvature
functional $\mathcal {M}_{2p}$ of a submanifold $M^n$ in a general
Riemannian manifold $N^{n+m}$ for $p=0,1,\cdots,[\frac{n}{2}]$. As
an example, we prove that closed complex submanifolds in complex
projective spaces are critical points of the functional $\mathcal
{M}_{2p}$, called relatively $2p$-minimal submanifolds, for all $p$.
At last, we discuss the relations between relatively $2p$-minimal
submanifolds and austere submanifolds in real space forms, as well
as a special variational problem.
\end{abstract}
\maketitle
\section{introduction}
It is well known that critical points of the volume functional for
isometric immersions are submanifolds with vanishing mean curvature
vector field. For a hypersurface, the mean curvature vector field is
just given by the mean value of the principal curvatures (up to a
direction). The higher order mean curvatures of a hypersurface are
then defined as the (normalized) higher order elementary symmetric
polynomials of the principal curvatures, whose variational
properties were studied by Reilly \cite{Re1} in real space forms and
by Li \cite{Li1} in general Riemannian manifolds. Reilly \cite{Re2}
also introduced the notion of higher order mean curvatures of
compact submanifolds in Euclidean spaces when studying the first
eigenvalue of the Laplacian. Moreover, he derived the first
variational formula of the integral of each even order mean
curvature. Afterwards, two natural generalizations came into
intensive studies.

One natural way to define the higher order mean curvatures of a
submanifold $M^n$ in a general Riemannian manifold $N^{n+m}$ is by
using the curvature operator $R^M$ (or the curvature forms
$\Omega_{ij}^M$) of the submanifold $M$, in which case the $2p$-th
mean curvature and $(2p+1)$-th mean curvature vector field will be
denoted by $K_{2p}^M,~H_{2p+1}^M$. The other way is to use the
relative curvature operator $R^M-R^N$ (or the relative curvature
forms $\Omega_{ij}^M-\Omega_{ij}^N$) of the immersion $f$ and the
corresponding higher order mean curvatures will be denoted by
$K_{2p}^f,~H_{2p+1}^f$. See section \ref{sec-pre} for explicit
definitions. Note that $H_1^M=H_1^f$ is just the mean curvature
vector field, for hypersurfaces $K_{2p}^f,~H_{2p+1}^f$ are just the
usual higher order mean curvatures, and for submanifolds in
Euclidean spaces $K_{2p}^M=K_{2p}^f, ~H_{2p+1}^M=H_{2p+1}^f$ are
just the higher order mean curvatures defined by Reilly. In general,
$K_{2p}^M$ depends only on the metric of the submanifold and thus is
an intrinsic invariant. It is called the $2p$-th Gauss-Bonnet
curvature by Labbi \cite{La} and its integral is called a Killing
invariant by Li \cite{Li2}. Both of Li \cite{Li2} and Labbi
\cite{La} studied the variational problem of these intrinsic
invariants and characterized the critical points by the vanishing of
$H_{2p+1}^M$ which thereby naturally generalize minimal submanifolds
in a general  Riemannian manifold into $2p$-minimal. On the other
hand, $K_{2p}^f$ is not intrinsic in general. Nevertheless, for
submanifolds in real space forms, it can be expressed as a linear
combination of $1,K_{2}^M,\cdots,K_{2p}^M$ and hence is intrinsic in
this case. Among other things, Li \cite{Li1} calculated the first
variational formula of the integral of $K_{2p}^f$ for submanifolds
in real space forms and for hypersurfaces in general Riemannian
manifolds. In analogy, Cao and Li \cite{Cao-L} considered the
variational problem of the integral of some linear combination of
$K_{2p}^f$s for submanifolds in real space forms so as to
characterize the critical points by the vanishing of $H_{2p+1}^f$,
which they also called $2p$-minimal submanifolds. In addition, they
obtained a non-existence result for closed stable $2p$-minimal
submanifolds in spheres that would reduce to a result of Simons
\cite{Simons} when $p=0$ (Some similar results for hypersurfaces have
been recently obtained by \cite{LJ}). In view of these two lines of
developments, we come to consider the variational problem of the
integral of $K_{2p}^f$ for submanifolds in a general Riemannian
manifold.

In this paper, we establish the first variational formula and its
Euler-Lagrange equation for the functional $\mathcal
{M}_{2p}(f):=\int_M K_{2p}^f dV_M$ defined as the total $2p$-th mean
curvature of a submanifold $M^n$ in a general Riemannian manifold
$N^{n+m}$ for $p=0,1,\cdots,[\frac{n}{2}]$. For hypersurface case
this has been done by Li \cite{Li1}. It is noteworthy to mention
that the object in this variational problem is no longer an
intrinsic invariant as in preceding references. As an example, we
prove that closed complex submanifolds in complex projective spaces
are critical for the functional $\mathcal {M}_{2p}$ for all $p$,
which we called relatively $2p$-minimal. At last, we discuss the
relations between $2p$-minimal submanifolds and austere submanifolds
in real space forms, as well as a special variational problem.
\section{Preliminaries}\label{sec-pre}
We begin with the definition of the $2p$-th mean curvature and
$(2p+1)$-th mean curvature vector field. Throughout this paper, we
adopt the notions used in \cite{Ge}.

 Let $M^n$ and $N^{n+m}$ be
Riemannian manifolds of dimension $n$ and $n+m$ respectively, and
$f:M^n\rightarrow N^{n+m}$ be an isometric immersion. Around each
point in $M$, choose a local orthonormal frame $\{e_1,\ldots,
e_{n+m}\}$ of $TN$ such that $\{e_1,\ldots, e_n\}$ are tangent
vectors of $M$ while $\{e_{n+1},\ldots, e_{n+m}\}$ are normal to
$M$. Then we use $\{\theta_A\mid 1\leq A\leq n+m\}$ and
$\{\theta_{AB}\mid 1\leq A,B\leq n+m\}$ to denote the corresponding
dual 1-forms and connection 1-forms respectively. The following
convention for indices will be used throughout this paper:
$$1\leq i,j,k\leq n,\quad n+1\leq\alpha,\beta,\gamma\leq n+m,\quad 1\leq A,B,C\leq n+m.$$
The structure equations of $N$ are given by
\begin{equation*}
\left\{
  \begin{array}{ll}
    d\theta_A=\sum\limits_B\theta_{AB}\wedge\theta_B,\quad \theta_{AB}=-\theta_{BA},\\
    d\theta_{AB}=\sum\limits_C\theta_{AC}\wedge\theta_{CB}-\Omega^N_{AB},
  \end{array}
\right.
\end{equation*}
where the curvature forms
$\Omega^N_{AB}=\frac{1}{2}\sum_{C,D}R_{ABCD}\theta_C\wedge\theta_D$
and $R_{ABAB}$ is the sectional curvature of $N$ at the two plane
$e_A\wedge e_B$. Comparing with the structure equations of $M$
\begin{equation*}
\left\{
  \begin{array}{ll}
    d\theta_i=\sum\limits_j\theta_{ij}\wedge\theta_j,\quad \theta_{ij}=-\theta_{ji},\\
    d\theta_{ij}=\sum\limits_k\theta_{ik}\wedge\theta_{kj}-\Omega^M_{ij},
  \end{array}
\right.
\end{equation*}
we define the relative curvature forms $\Omega_{ij}$ of the
immersion $f$ by using Gauss equation
\begin{equation}\label{def-Omegaij}
\Omega_{ij}:=
\Omega^M_{ij}-\Omega^N_{ij}=\sum_\alpha\theta_{i\alpha}\wedge\theta_{j\alpha}.
\end{equation}

\begin{defn}
For $p=0,1,\cdots,[\frac{n}{2}]$, the $2p$-th (relative) mean
curvature $K^f_{2p}$ and the $(2p+1)$-th (relative) mean curvature
vector field $H^f_{2p+1}$ of $f$ are defined as follows (cf.
\cite{Ge}):
\begin{equation}\label{def-meancurv}
\begin{array}{lll}
K^f_{2p}=\frac{(n-2p)!}{n!}\sum\limits_{I_{2p}}\Omega_{i_1i_2}\wedge\dots\wedge
\Omega_{i_{2p-1}i_{2p}}(e_{i_1},\ldots,e_{i_{2p}}),\\
H^f_{2p+1}=\frac{(n-2p-1)!}{n!}\sum\limits_{\alpha}\sum\limits_{I_{2p+1}}\Omega_{i_1i_2}\wedge\dots\wedge
\Omega_{i_{2p-1}i_{2p}}\wedge\theta_{i_{2p+1}\alpha}(e_{i_1},\ldots,e_{i_{2p+1}})e_\alpha,
\end{array}
\end{equation}
where the index $I_k=(i_1,\ldots,i_k)$ denotes $k$ different
integers in $\{1,\ldots,n\}$ for $k=1,\dots, n$. We also denote
$K^f_0:=1$, $H^f_{-1}:=H^f_{n+1}:=0$.
\end{defn}

One can easily find that $K^f_{2p}$ and $H^f_{2p+1}$ are independent
of the choice of the local frame and hence well-defined (cf.
\cite{Ge}). In analogy, the $2p$-th Gauss-Bonnet curvature
$K^M_{2p}$ and the $(2p+1)$-th mean curvature vector field
$H^M_{2p+1}$ introduced in last section can be defined by the same
formulas of (\ref{def-meancurv}) with $\Omega_{ij}^M$ instead of all
$\Omega_{ij}$ therein. When $N^{n+m}$ is the real space form
$\mathbb{R}^{n+m}(c)$ of constant sectional curvature $c$, a
straightforward calculation shows that the two families can express
each other by
\begin{equation}\label{rel-Hf-HM}
K^M_{2p}=\sum_{k=0}^pc^{p-k}(^p_k)K^f_{2k}, \quad
H^M_{2p+1}=\sum_{k=0}^pc^{p-k}(^p_k)H^f_{2p+1}.
\end{equation}

If $M^n$ is compact, possibly with boundary, the total $2p$-th mean
curvature of $f$ is given by the integral
\begin{equation}\label{def-M2pf}
\mathcal {M}_{2p}(f):=\int_M K_{2p}^f dV_M.
\end{equation}
We apply a variation of the immersion $f$ as follows: Let $I$ be the
interval $-\frac{1}{2}<t<\frac{1}{2}$. Let $F: M\times I\rightarrow
N$ be a differentiable mapping such that its restriction to $M\times
t\:(t\in I)$, is an immersion, denoted by $f_t$, and that
$F(x,0)=f(x)$ for $x\in M$. Our aim is to evaluate the first
variational formula of the functional $\mathcal {M}_{2p}$ under such
variations, that is to calculate
\begin{equation}\label{variation0}
\frac{d}{dt}\mathcal {M}_{2p}(f_t)\Big|_{t=0}.
\end{equation}
To treat with this type of variational problems, we would like to
apply the moving frame method presented by Chern in \cite{Chern}.
Choose a local orthonormal frame field $\{e_A(x,t)\}$ of $TN$ over
$M\times I$ such that for every $t\in I$, $e_i(x,t)$ are tangent
vectors to $M_t:=f_t(M)=F(M\times t)$ at $(x,t)$ and hence
$e_{\alpha}(x,t)$ are normal vectors. Let $\omega_A,~\omega_{AB}$ be
the corresponding dual $1$-forms and connection $1$-forms of $N$
over $M\times I$. Then they can be written as
\begin{equation}\label{def-var-frame}
 \omega_i=\theta_i+a_idt,\quad \omega_\alpha=a_\alpha dt,\quad
 \omega_{AB}=\theta_{AB}+a_{AB}dt,
\end{equation}
where $\theta_i,\theta_{AB}$ are linear differential forms in $M$
with coefficients which may depend on $t$. For $t=0$ they reduce to
the forms with the same notation on $M$. The vector
$\nu:=\sum_Aa_Ae_A(x,0)=\frac{d}{dt}F(x,t)|_{t=0}$ is called the
deformation vector. We write the exterior differential operator $d$
on $M\times I$ as
\begin{equation*}
d=d_M+dt\frac{\partial}{\partial t}.
\end{equation*}
Now by the definition of $2p$-th mean curvature, we have
\begin{eqnarray}\label{trans-K2p}
K^{f_t}_{2p}dV_{M_t}&=&\frac{(n-2p)!}{n!}\sum\limits_{I_{2p}}\Omega_{i_1i_2}\wedge\dots\wedge
\Omega_{i_{2p-1}i_{2p}}(e_{i_1},\ldots,e_{i_{2p}})dV_{M_t}\\
&=&\frac{(n-2p)!}{n!}\sum\limits_{I_{2p}}\Omega_{i_1i_2}\wedge\dots\wedge
\Omega_{i_{2p-1}i_{2p}}(e_{i_1},\ldots,e_{i_{2p}})\theta_{1}\wedge\dots\wedge\theta_{n}\nonumber\\
&=&\frac{1}{n!}\sum\limits_{I_{n}}\delta_{I_n}\Omega_{i_1i_2}\wedge\dots\wedge
\Omega_{i_{2p-1}i_{2p}}\wedge\theta_{i_{2p+1}}\wedge\dots\wedge\theta_{i_n},\nonumber
\end{eqnarray}
where $\delta_{I_n}:=\delta^{1,\ldots,n}_{i_1,\ldots, i_n}$ denotes
the generalized Kronecker symbol. Similarly, we have
\begin{equation}\label{trans-H2p+1}
\langle H^{f_t}_{2p+1},\nu\rangle
dV_{M_t}=\frac{1}{n!}\sum_\alpha\sum\limits_{I_n}a_\alpha
\delta_{I_n}\Omega_{i_1i_2}\wedge\dots\wedge\Omega_{i_{2p-1}i_{2p}}\wedge
\theta_{i_{2p+1}\alpha}\wedge\theta_{i_{2p+2}}\wedge\dots\wedge\theta_{i_n}.
\end{equation} Define an $n$-form on $M$
\begin{equation}\label{def-Theta2p}
\Theta_{2p}=\sum\limits_{I_{n}}\delta_{I_n}\Omega_{i_1i_2}\wedge\dots\wedge
\Omega_{i_{2p-1}i_{2p}}\wedge\theta_{i_{2p+1}}\wedge\dots\wedge\theta_{i_n}.\\
\end{equation}
 Then by (\ref{trans-K2p}) our variational problem (\ref{variation0}) turns to
\begin{equation}\label{variation1}
\frac{d}{dt}\mathcal
{M}_{2p}(f_t)\Big|_{t=0}=\frac{d}{dt}\int_{M_t}K^{f_t}_{2p}dV_{M_t}\Big|_{t=0}
=\frac{1}{n!}\int_{M}\frac{\partial}{\partial
t}\Theta_{2p}\Big|_{t=0}.
\end{equation}

\section{Variational formula of the total $(2p)$-th mean curvature}
In this section we will calculate in detail the first variational
formula of the total $2p$-th mean curvature $\mathcal {M}_{2p}(f)$
in (\ref{def-M2pf}) by moving frame method.

From last section, it suffices to calculate formula
(\ref{variation1}). Recalling the definition of $\Omega_{ij}$ in
(\ref{def-Omegaij}), we put
$\widetilde{\Omega}_{ij}:=\sum_\alpha\omega_{i\alpha}\wedge\omega_{j\alpha}$
where $\omega_{i\alpha}$ is the connection 1-form given in
(\ref{def-var-frame}). Then substituting
$\widetilde{\Omega}_{ij},\omega_i$ for $\Omega_{ij}, \theta_{i}$
into (\ref{def-Theta2p}) respectively, we can define an $n$-form
$\Psi_{2p}$ on $M\times I$:
\begin{equation}\label{def-Psi2p}
\Psi_{2p}=\sum_{I_{n}}\delta_{I_n}\widetilde{\Omega}_{i_1i_2}
\wedge\dots\wedge\widetilde{\Omega}_{i_{2p-1}i_{2p}}\wedge\omega_{i_{2p+1}}\wedge\dots\wedge\omega_{i_n}.
\end{equation}
It is easily seen from (\ref{def-var-frame}, \ref{def-Theta2p}) that
\begin{equation}\label{decom-Psi2p}
\Psi_{2p}=\Theta_{2p}+dt\wedge\Phi_{2p},
\end{equation}
 where
\begin{eqnarray}\label{def-Phi2p}
\Phi_{2p}&=&-2p\sum_{I_n,\alpha}\delta_{I_n}a_{i_{2p}\alpha}
\Omega_{i_1i_2}\wedge\dots\wedge\Omega_{i_{2p-3}i_{2p-2}}\wedge\theta_{i_{2p-1}\alpha}\wedge\theta_{i_{2p+1}}\wedge\dots\wedge\theta_{i_n}\\
&&+(n-2p)\sum_{I_{n}}\delta_{I_n}a_{i_{2p+1}}\Omega_{i_1i_2}
\wedge\dots\wedge\Omega_{i_{2p-1}i_{2p}}\wedge\theta_{i_{2p+2}}\wedge\dots\wedge\theta_{i_n}.\nonumber
\end{eqnarray}
Then taking exterior differential of the equation
(\ref{decom-Psi2p}) we get
\begin{equation}\label{dPsi2p1}
d\Psi_{2p}=d_M\Theta_{2p}+dt\wedge\frac{\partial}{\partial
t}\Theta_{2p}-dt\wedge d_M\Phi_{2p}.
\end{equation}
On the other hand, $d\Psi_{2p}$ can be calculated directly from
(\ref{def-Psi2p}) by using the structure equations of $N$ as the
following.
\begin{lem}\label{Lem-dPsi2p}
 Notations as above, then
\begin{equation}\label{dPsi2p-Lem}
\begin{array}{ccl}
d\Psi_{2p}&=&(n-2p)\sum\limits_\alpha\sum\limits_{I_n}\delta_{I_n}\widetilde{\Omega}_{i_1i_2}\wedge\dots\wedge\widetilde{\Omega}_{i_{2p-1}i_{2p}}\wedge
\omega_{i_{2p+1}\alpha}\wedge\omega_{\alpha}\wedge\omega_{i_{2p+2}}\wedge\dots\wedge\omega_{i_n}\\
&&+2p\sum\limits_\alpha\sum\limits_{I_n}\delta_{I_n}\widetilde{\Omega}_{i_1i_2}\wedge\dots\wedge\widetilde{\Omega}_{i_{2p-3}i_{2p-2}}\wedge
\omega_{i_{2p-1}\alpha}\wedge\Omega^N_{i_{2p}\alpha}\wedge\omega_{i_{2p+1}}\wedge\dots\wedge\omega_{i_n}.
\end{array}
\end{equation}
\end{lem}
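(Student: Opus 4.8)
The plan is to compute $d\Psi_{2p}$ head-on from the definition (\ref{def-Psi2p}) with the Leibniz rule, differentiating each of the $p$ factors $\widetilde{\Omega}_{i_{2t-1}i_{2t}}$ and each of the $n-2p$ dual forms $\omega_{i_s}$, and then to sort the resulting terms by the new object they produce. The first structure equation gives $d\omega_{i_s}=\sum_k\omega_{i_sk}\wedge\omega_k+\sum_\alpha\omega_{i_s\alpha}\wedge\omega_\alpha$, while a short computation from the second structure equation together with $\widetilde{\Omega}_{ij}=\sum_\alpha\omega_{i\alpha}\wedge\omega_{j\alpha}$ yields
\[
d\widetilde{\Omega}_{ij}=\sum_k\bigl(\omega_{ik}\wedge\widetilde{\Omega}_{kj}+\omega_{jk}\wedge\widetilde{\Omega}_{ik}\bigr)+\sum_\alpha\bigl(\omega_{i\alpha}\wedge\Omega^N_{j\alpha}-\omega_{j\alpha}\wedge\Omega^N_{i\alpha}\bigr),
\]
where the purely normal terms (those carrying a factor $\omega_{\alpha\beta}$) have already dropped out in conjugate pairs under $\alpha\leftrightarrow\beta$ using $\omega_{\alpha\beta}=-\omega_{\beta\alpha}$. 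Thus every term of $d\Psi_{2p}$ carries either an ambient curvature $\Omega^N$, a normal dual form $\omega_\alpha$, or only tangential connection forms $\omega_{ik}$.

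Granting this split, the two surviving families assemble readily. The $\Omega^N$ family comes only from the curvature factors; contracting $\sum_\alpha(\omega_{i_{2t-1}\alpha}\wedge\Omega^N_{i_{2t}\alpha}-\omega_{i_{2t}\alpha}\wedge\Omega^N_{i_{2t-1}\alpha})$ against $\delta_{I_n}$ and relabelling $i_{2t-1}\leftrightarrow i_{2t}$ folds the two summands into one with a factor $2$, and the $p$ curvature factors contribute identically after moving the differentiated pair into the last curvature slot, producing the coefficient $2p$ and the second term of (\ref{dPsi2p-Lem}). Similarly the $\omega_\alpha$ family comes only from the dual forms via $\sum_\alpha\omega_{i_s\alpha}\wedge\omega_\alpha$; the $n-2p$ values of $s$ contribute equally once $\delta_{I_n}$ is used to move the differentiated index into the slot $i_{2p+1}$, giving the coefficient $n-2p$ and the first term of (\ref{dPsi2p-Lem}). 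Here I would track signs with care: the Leibniz sign $(-1)^{s-1}$ picked up as $d$ passes the forms of total degree $s-1$ standing before position $s$ is exactly cancelled by the sign of the relabelling permutation, leaving the overall $+$ recorded in the lemma.

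The main obstacle is to show that the third family, carrying only tangential forms $\omega_{ik}$, vanishes against $\delta_{I_n}$. I would split it by the value of the contracted index $k$. In a term from $d\omega_{i_s}$ the dual form $\omega_k$ forces $k$ to be one of the curvature indices $i_1,\dots,i_{2p}$, since $k$ equal to a dual index repeats an existing $\omega_{i_r}$ and $k=i_s$ kills $\omega_{i_si_s}$. In a term from $d\widetilde{\Omega}$, the choices $k=i_{2t-1},i_{2t}$ die for the same reason, the choices where $k$ lands in another curvature factor vanish by antisymmetry---swapping the two unshared indices of the two factors sharing $k$ leaves the product of those commuting $2$-forms fixed while reversing $\delta_{I_n}$---and only $k$ equal to a dual index $i_s$ survives. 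The surviving terms from the two sources are then matched by a sign-reversing bijection: transposing the curvature index and the dual index flips $\delta_{I_n}$, and together with $\omega_{ik}=-\omega_{ki}$, $\widetilde{\Omega}_{ij}=-\widetilde{\Omega}_{ji}$ and a reshuffle of the wedge factors this pairs each $d\omega_{i_s}$ term with exactly one $d\widetilde{\Omega}$ term of opposite sign. Verifying that this really is a clean, sign-cancelling bijection---equivalently, that the whole tangential family is the fully antisymmetrised second Bianchi identity for the relative curvature, so that $d$ acts on $\Psi_{2p}$ as the exterior covariant derivative because $\delta_{I_n}$ is parallel---is the technical heart of the proof; once it is in hand, (\ref{dPsi2p-Lem}) follows by collecting the two surviving families.
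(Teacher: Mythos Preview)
Your proposal is correct and follows essentially the same route as the paper's own proof: Leibniz rule on $\Psi_{2p}$, the structure equations to expand the differentials, and then the same three-way split into $\Omega^N$-terms, $\omega_\alpha$-terms, and purely tangential $\omega_{ik}$-terms, with the tangential family shown to cancel by exactly the index swaps you describe. The only cosmetic difference is that you first package the computation into a closed formula for $d\widetilde{\Omega}_{ij}$ (handling the $\omega_{\alpha\beta}$ cancellation there), whereas the paper expands $\widetilde{\Omega}_{i_{2p-1}i_{2p}}=\sum_\alpha\omega_{i_{2p-1}\alpha}\wedge\omega_{i_{2p}\alpha}$ first and differentiates one of the $\omega_{i\alpha}$ factors directly; the subsequent case analysis on the contracted tangential index $j$ and the pairing argument are identical in substance.
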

\begin{proof}
Using the structure equations of $N$ and interchanging the indices
whenever there occur two essentially equal terms, we can obtain the
following expression:
\begin{eqnarray*}
d\Psi_{2p}&=&-2p\sum\limits_{\alpha}\sum\limits_{I_n}\delta_{I_n}
\widetilde{\Omega}_{i_1i_2}\wedge\dots\wedge\widetilde{\Omega}_{i_{2p-3}i_{2p-2}}
\wedge\omega_{i_{2p-1}\alpha}\wedge d\omega_{i_{2p}\alpha}\wedge
\omega_{i_{2p+1}}\wedge\dots\wedge\omega_{i_n}\\
&&+(n-2p)\sum\limits_{I_n}\delta_{I_n}\widetilde{\Omega}_{i_1i_2}
\wedge\dots\wedge\widetilde{\Omega}_{i_{2p-1}i_{2p}}\wedge
d\omega_{i_{2p+1}}\wedge\omega_{i_{2p+2}}\wedge\dots\wedge\omega_{i_n}\\
&=&-2p\sum\limits_{\alpha}\sum\limits_{I_n,j}\delta_{I_n}\widetilde{\Omega}_{i_1i_2}\wedge\dots\wedge
\widetilde{\Omega}_{i_{2p-3}i_{2p-2}}\wedge\omega_{i_{2p-1}\alpha}\wedge
\omega_{i_{2p}j}\wedge\omega_{j\alpha}\wedge
\omega_{i_{2p+1}}\wedge\dots\wedge\omega_{i_n}\\
&&-2p\sum\limits_{\alpha,\beta}\sum\limits_{I_n}\delta_{I_n}\widetilde{\Omega}_{i_1i_2}\wedge\dots\wedge
\widetilde{\Omega}_{i_{2p-3}i_{2p-2}}\wedge\omega_{i_{2p-1}\alpha}\wedge
\omega_{i_{2p}\beta}\wedge\omega_{\beta\alpha}
\wedge\omega_{i_{2p+1}}\wedge\dots\wedge\omega_{i_n}\\
&&+2p\sum\limits_{\alpha}\sum\limits_{I_n}\delta_{I_n}\widetilde{\Omega}_{i_1i_2}\wedge\dots\wedge
\widetilde{\Omega}_{i_{2p-3}i_{2p-2}}\wedge\omega_{i_{2p-1}\alpha}\wedge
\Omega^N_{i_{2p}\alpha}\wedge
\omega_{i_{2p+1}}\wedge\dots\wedge\omega_{i_n}\\
&&+(n-2p)\sum\limits_{I_n,j}\delta_{I_n}\widetilde{\Omega}_{i_1i_2}\wedge\dots\wedge
\widetilde{\Omega}_{i_{2p-1}i_{2p}}\wedge
\omega_{i_{2p+1}j}\wedge\omega_{j}\wedge\omega_{i_{2p+2}}\wedge\dots\wedge\omega_{i_n}\\
&&+(n-2p)\sum\limits_{\alpha}\sum\limits_{I_n}\delta_{I_n}\widetilde{\Omega}_{i_1i_2}\wedge\dots\wedge\widetilde{\Omega}_{i_{2p-1}i_{2p}}\wedge
\omega_{i_{2p+1}\alpha}\wedge\omega_{\alpha}\wedge\omega_{i_{2p+2}}\wedge\dots\wedge\omega_{i_n}.
\end{eqnarray*}
Since the index $I_n=(i_1,\ldots,i_n)$ is a permutation of
$\{1,\ldots,n\}$, the sum over $j$ from $1$ to $n$ can be looked as
from $i_1$ to $i_n$, which leads to the following:
\begin{eqnarray*}
&&-2p\sum\limits_{\alpha}\sum\limits_{I_n,j}\delta_{I_n}\widetilde{\Omega}_{i_1i_2}\wedge\dots\wedge
\widetilde{\Omega}_{i_{2p-3}i_{2p-2}}\wedge\omega_{i_{2p-1}\alpha}\wedge
\omega_{i_{2p}j}\wedge\omega_{j\alpha}\wedge
\omega_{i_{2p+1}}\wedge\dots\wedge\omega_{i_n}\\
&=&2p\sum_{I_n,j}\delta_{I_n}\widetilde{\Omega}_{i_1i_2}\wedge\dots\wedge
\widetilde{\Omega}_{i_{2p-3}i_{2p-2}}\wedge\widetilde{\Omega}_{i_{2p-1}j}\wedge\omega_{i_{2p}j}\wedge
\omega_{i_{2p+1}}\wedge\dots\wedge\omega_{i_n}\\
&=&2p(2p-2)\sum_{I_n}\delta_{I_n}\widetilde{\Omega}_{i_1i_2}\wedge\dots\wedge
\widetilde{\Omega}_{i_{2p-3}i_{2p-2}}\wedge\widetilde{\Omega}_{i_{2p-1}i_{2p-2}}\wedge\omega_{i_{2p}i_{2p-2}}\wedge
\omega_{i_{2p+1}}\wedge\dots\wedge\omega_{i_n}\\
&&+2p(n-2p)\sum_{I_n}\delta_{I_n}\widetilde{\Omega}_{i_1i_2}\wedge\dots\wedge
\widetilde{\Omega}_{i_{2p-3}i_{2p-2}}\wedge\widetilde{\Omega}_{i_{2p-1}i_{2p+1}}\wedge\omega_{i_{2p}i_{2p+1}}\wedge
\omega_{i_{2p+1}}\wedge\dots\wedge\omega_{i_n}\\
&=&0+2p(n-2p)\sum_{I_n}\delta_{I_n}\widetilde{\Omega}_{i_1i_2}\wedge\dots\wedge
\widetilde{\Omega}_{i_{2p-3}i_{2p-2}}\wedge\widetilde{\Omega}_{i_{2p-1}i_{2p+1}}\wedge\omega_{i_{2p}i_{2p+1}}\wedge
\omega_{i_{2p+1}}\wedge\dots\wedge\omega_{i_n}\\
&=&-2p(n-2p)\sum\limits_{I_n}\delta_{I_n}\widetilde{\Omega}_{i_1i_2}\wedge\dots\wedge\widetilde{\Omega}_{i_{2p-1}i_{2p}}\wedge
\omega_{i_{2p+1}i_{2p}}\wedge\omega_{i_{2p}}\wedge\omega_{i_{2p+2}}\wedge\dots\wedge\omega_{i_n}\quad
(i_{2p}\leftrightarrow i_{2p+1}),
\end{eqnarray*}
where the vanishing of the third line can be easily checked by
exchanging the indices $i_{2p-1},i_{2p-3}$. Similarly,
\begin{eqnarray*}
&&(n-2p)\sum\limits_{I_n,j}\delta_{I_n}\widetilde{\Omega}_{i_1i_2}\wedge\dots\wedge
\widetilde{\Omega}_{i_{2p-1}i_{2p}}\wedge
\omega_{i_{2p+1}j}\wedge\omega_{j}\wedge\omega_{i_{2p+2}}\wedge\dots\wedge\omega_{i_n}\\
&=&2p(n-2p)\sum\limits_{I_n}\delta_{I_n}\widetilde{\Omega}_{i_1i_2}\wedge\dots\wedge\widetilde{\Omega}_{i_{2p-1}i_{2p}}\wedge
\omega_{i_{2p+1}i_{2p}}\wedge\omega_{i_{2p}}\wedge\omega_{i_{2p+2}}\wedge\dots\wedge\omega_{i_n}.
\end{eqnarray*}
Combining with
\begin{eqnarray*}
&&\sum\limits_{\alpha,\beta}\sum\limits_{I_n}\delta_{I_n}\widetilde{\Omega}_{i_1i_2}
\wedge\dots\wedge\widetilde{\Omega}_{i_{2p-3}i_{2p-2}}\wedge\omega_{i_{2p-1}\alpha}
\wedge \omega_{i_{2p}\beta}\wedge\omega_{\beta\alpha}\wedge\omega_{i_{2p+1}}\wedge\dots\wedge\omega_{i_n}\\
&=&-\sum\limits_{\alpha,\beta}\sum\limits_{I_n}\delta_{I_n}\widetilde{\Omega}_{i_1i_2}
\wedge\dots\wedge\widetilde{\Omega}_{i_{2p-3}i_{2p-2}}\wedge
\omega_{i_{2p}\beta}\wedge
\omega_{i_{2p-1}\alpha}\wedge\omega_{\alpha\beta}\wedge\omega_{i_{2p+1}}\wedge\dots\wedge\omega_{i_n}
\quad (^{i_{2p-1}\leftrightarrow i_{2p}}_{\,\,\quad\alpha\leftrightarrow\beta})\\
&=&-\sum\limits_{\alpha,\beta}\sum\limits_{I_n}\delta_{I_n}\widetilde{\Omega}_{i_1i_2}
\wedge\dots\wedge\widetilde{\Omega}_{i_{2p-3}i_{2p-2}}\wedge\omega_{i_{2p-1}\alpha}
\wedge \omega_{i_{2p}\beta}\wedge\omega_{\beta\alpha}\wedge\omega_{i_{2p+1}}\wedge\dots\wedge\omega_{i_n}\\
&=&0,
\end{eqnarray*}
we complete the proof of Lemma \ref{Lem-dPsi2p}.
\end{proof}

From Lemma \ref{Lem-dPsi2p} we can divide the expansion of
$d\Psi_{2p}$ into two parts: one part involving $dt$ and the other
not. In what follows the part of $dt$ in ($\ref{dPsi2p-Lem}$) will
be calculated, since we want to get the expression of
$\frac{\partial}{\partial t}\Theta_{2p}$ concretely by comparing
with the corresponding terms in (\ref{dPsi2p1}).

Substituting into the first term of $d\Psi_{2p}$ in
(\ref{dPsi2p-Lem}) the expression of $\omega_{A},\omega_{AB}$ in
(\ref{def-var-frame}) and recalling (\ref{trans-H2p+1}), we get
\begin{eqnarray}\label{dPsi2p-1st term}
&&(n-2p)\sum\limits_\alpha\sum\limits_{I_n}\delta_{I_n}\widetilde{\Omega}_{i_1i_2}\wedge\dots\wedge\widetilde{\Omega}_{i_{2p-1}i_{2p}}\wedge
\omega_{i_{2p+1}\alpha}\wedge\omega_{\alpha}\wedge\omega_{i_{2p+2}}\wedge\dots\wedge\omega_{i_n}\\
&=&-(n-2p)\sum\limits_\alpha\sum\limits_{I_n}a_\alpha
dt\wedge\delta_{I_n}\Omega_{i_1i_2}\wedge\dots\wedge\Omega_{i_{2p-1}i_{2p}}\wedge
\theta_{i_{2p+1}\alpha}\wedge\theta_{i_{2p+2}}\wedge\dots\wedge\theta_{i_n}\nonumber\\
&=&-(n-2p)n!dt\wedge\langle H^{f_t}_{2p+1},\nu\rangle
dV_{M_t}.\nonumber
\end{eqnarray}
Recall that $\Omega^N_{i\alpha}=\frac{1}{2}\sum_{C,D}R_{i\alpha
CD}\omega_C\wedge\omega_D$. The second term of (\ref{dPsi2p-Lem})
turns to
\begin{eqnarray}\label{dPsi2p-2nd term}
&&2p\sum\limits_\alpha\sum\limits_{I_n}\delta_{I_n}\widetilde{\Omega}_{i_1i_2}\wedge\dots\wedge
\widetilde{\Omega}_{i_{2p-3}i_{2p-2}}\wedge\omega_{i_{2p-1}\alpha}\wedge\Omega^N_{i_{2p}\alpha}
\wedge\omega_{i_{2p+1}}\wedge\dots\wedge\omega_{i_n}\\
&=&2p\sum\limits_{\alpha,\beta}\sum\limits_{I_n,j}\delta_{I_n}R_{i_{2p}\alpha
j\beta}
\widetilde{\Omega}_{i_1i_2}\wedge\dots\wedge\widetilde{\Omega}_{i_{2p-3}i_{2p-2}}\wedge
\omega_{i_{2p-1}\alpha}\wedge\omega_{j}\wedge\omega_{\beta}\wedge\omega_{i_{2p+1}}\wedge\dots\wedge\omega_{i_n}\nonumber\\
&&+p\sum\limits_{\alpha}\sum\limits_{I_n,j,k}\delta_{I_n}R_{i_{2p}\alpha
jk}\widetilde{\Omega}_{i_1i_2}
\wedge\dots\wedge\widetilde{\Omega}_{i_{2p-3}i_{2p-2}}\wedge
\omega_{i_{2p-1}\alpha}\wedge\omega_{j}
\wedge\omega_{k}\wedge\omega_{i_{2p+1}}\wedge\dots\wedge\omega_{i_n}\nonumber\\
&=:&\Gamma_1+\Gamma_2.\nonumber
\end{eqnarray}
To simplify the notation, we put
\begin{equation}\label{def-Omega_I2p}
\Omega_{I_{2p}}:=\Omega_{i_1i_2}\wedge\dots\wedge\Omega_{i_{2p-1}i_{2p}}.
\end{equation}
Then we can get the expression of $\Gamma_1$ as the following:
\begin{eqnarray*}
\Gamma_1&=&2p\sum\limits_{\alpha,\beta}\sum\limits_{I_n,j}
\delta_{I_n}a_{\alpha}dt\wedge R_{i_{2p}\beta
j\alpha}\Omega_{I_{2p-2}} \wedge
\theta_{i_{2p-1}\beta}\wedge\theta_{j}\wedge\theta_{i_{2p+1}}
\wedge\dots\wedge\theta_{i_n}\\
&=&2p(n-2p)!\sum\limits_{\alpha,\beta}\sum\limits_{I_{2p},j}a_\alpha
dt\wedge R_{i_{2p}\beta j\alpha}\Omega_{I_{2p-2}}\wedge\theta_{i_{2p-1}
\beta}\wedge \theta_j(e_{i_1},\ldots,e_{i_{2p}})dV_{M_t}\\
&=&\frac{2p(n-2p)!}{(2p-1)!}\sum\limits_{\alpha,\beta}\sum_{I_{2p-1},i}
\sum_{J_{2p-1},j}a_\alpha dt\wedge \delta ^{i_1,\ldots,i_{2p-1},i}_{j_1,\ldots,j_{2p-1},j}
R_{{i}\beta j\alpha}\Omega_{I_{2p-2}}\wedge\theta_{i_{2p-1}\beta}(e_{j_1},\ldots,e_{j_{2p-1}})dV_{M_t}.\\
\end{eqnarray*}
Similarly, we can compute the $dt$ part of $\Gamma_2$ (denoted by
$\widetilde{\Gamma}_2$) as well:
\begin{eqnarray*}
\widetilde{\Gamma}_2&=&-p(n-2p)\sum\limits_{\alpha}\sum\limits_{I_n,j,k}\delta_{I_n}a_{i_{2p+1}}dt\wedge
R_{i_{2p}\alpha jk}\Omega_{I_{2p-2}}
\wedge\theta_{i_{2p-1}\alpha}\wedge\theta_j\wedge\theta_k\wedge\theta_{i_{2p+2}}\wedge\dots\wedge\theta_{i_n}\\
&&-2p\sum\limits_{\alpha}\sum\limits_{I_n,j,k}\delta_{I_n}a_{j}dt\wedge
R_{i_{2p}\alpha jk}\Omega_{I_{2p-2}}
\wedge\theta_{i_{2p-1}\alpha}\wedge\theta_k\wedge\theta_{i_{2p+1}}\wedge\dots\wedge\theta_{i_n}\\
&&+p\sum\limits_{\alpha}\sum\limits_{I_n,j,k}\delta_{I_n}a_{i_{2p-1}\alpha}dt\wedge
R_{i_{2p}\alpha jk}
\Omega_{I_{2p-2}}\wedge \theta_{j}\wedge\theta_{k}\wedge\theta_{i_{2p+1}}\wedge\dots\wedge\theta_{i_n}\\
&&-2p(p-1)\sum\limits_{\alpha,\beta}\sum\limits_{I_n,j,k}\delta_{I_n}a_{i_{2p-2}\beta}dt\wedge
R_{i_{2p}\alpha jk}
\Omega_{I_{2p-4}}\wedge\theta_{i_{2p-3}\beta}\wedge\theta_{i_{2p-1}\alpha}\wedge \theta_{j}\wedge\theta_{k}\wedge\theta_{i_{2p+1}}\wedge\dots\wedge\theta_{i_n}\\
&=&\frac{p(n-2p)!}{(2p-1)!}\sum\limits_{\alpha}\sum\limits_{I_{2p-1},i,i'}\sum_{J_{2p-1},j,j'}a_{i}dt\wedge
\delta^{i_1,\ldots,i_{2p-1},i,i'}_{j_1,\ldots,j_{2p-1},j,j'}R_{i'\alpha
jj'}\Omega_{I_{2p-2}}
\wedge\theta_{i_{2p-1}\alpha}(e_{j_1},\ldots,e_{j_{2p-1}})dV_{M_t}\\
&&-\frac{2p(n-2p)!}{(2p-1)!}\sum\limits_{\alpha}\sum_{I_{2p-1},i,i'}\sum_{J_{2p-1},j'}a_{i}dt\wedge
\delta ^{i_1,\ldots,i_{2p-1},i'}_{j_1,\ldots,j_{2p-1},j'}R_{i'\alpha
ij'}\Omega_{I_{2p-2}}\wedge\theta_{i_{2p-1}\alpha}(e_{j_1},\ldots,e_{j_{2p-1}})dV_{M_t}\\
&&+\frac{p(n-2p)!}{(2p-2)!}\sum_{\alpha}\sum_{I_{2p-2},i,i'}\sum_{J_{2p-2},j,j'}\Big(a_{i\alpha}dt
\wedge\delta
^{i_1,\ldots,i_{2p-2},i,i'}_{j_1,\ldots,j_{2p-2},j,j'}R_{i'\alpha
jj'}
\Omega_{I_{2p-2}}(e_{j_1},\dots,e_{j_{2p-2}})dV_{M_t}\\
&&+2(p-1)\sum_{\beta}a_{i\alpha} dt\wedge \delta
^{i_1,\ldots,i_{2p-2},i,i'}_{j_1,\ldots,j_{2p-2},j,j'}R_{i'\beta
jj'}\Omega_{I_{2p-4}}\wedge\theta_{i_{2p-3}\alpha}\wedge\theta_{i_{2p-2}\beta}
(e_{j_1},\ldots,e_{j_{2p-2}})dV_{M_t}\Big).\\
\end{eqnarray*}
Now we are ready to give the first variational formula.
\begin{thm}\label{Thm-varformula}
 Let $f:M^{n}\rightarrow N^{n+m}$ be an isometric immersion from a compact manifold $M$ (possibly with boundary)
into a Riemannian manifold $N$. Then for
$p=0,1,\cdots,[\frac{n}{2}]$, the first variational formula of the
total $2p$-th mean curvature $\mathcal{M}_{2p}(f)$ in
(\ref{def-M2pf}) is given by
\begin{equation*}
\frac{d}{dt}\mathcal{M}_{2p}(f_t)\Big|_{t=0}=\int_M\Big(\langle
-(n-2p)H^f_{2p+1}+pW_{2p-1},\nu\rangle+p\sum_i\langle
Q^i_{2p-2},\nabla_{e_i}\nu\rangle\Big)dV_M+\frac{1}{n!}\int_{\partial
M}\Phi_{2p}.
\end{equation*}
Here $\nu$ is the deformation vector field, $\nabla$ is the
Levi-Civita connection of $N$, $H^f_{2p+1}$ is the $(2p+1)$-th mean
curvature vector field, $\Phi_{2p}$ is defined in (\ref{def-Phi2p}),
$W_{2p-1}$ and $Q^i_{2p-2}$ are defined by
\begin{eqnarray*}
W_{2p-1}&=&\frac{(n-2p)!}{(2p-1)!n!}\sum_{\alpha,\beta}\sum_{I_{2p-1},i}\sum_{J_{2p-1},j}
\delta ^{i_1,\ldots,i_{2p-1},i}_{j_1,\ldots,j_{2p-1},j}R_{{i}\beta j\alpha}
\Omega_{I_{2p-2}}\wedge\theta_{i_{2p-1}\beta}(e_{j_1},\ldots,e_{j_{2p-1}})e_\alpha\\
&&+\frac{(n-2p)!}{(2p-1)!n!}\sum_{\alpha}\sum_{I_{2p-1},i,i'}\sum_{J_{2p-1},j'}\Big(\sum_j\delta
^{i_1,\ldots,i_{2p-1},i,i'}_{j_1,\ldots,j_{2p-1},j,j'}R_{i'\alpha
jj'}\Omega_{I_{2p-2}}\wedge\theta_{i_{2p-1}\alpha}(e_{j_1},\ldots,e_{j_{2p-1}})\\
&&-2\delta
^{i_1,\ldots,i_{2p-1},i'}_{j_1,\ldots,j_{2p-1},j'}R_{i'\alpha ij'}
\Omega_{I_{2p-2}}\wedge\theta_{i_{2p-1}\alpha}(e_{j_1},\ldots,e_{j_{2p-1}})\Big)e_i,\\
Q^i_{2p-2}&=&\frac{(n-2p)!}{(2p-2)!n!}\sum\limits_\alpha\sum\limits_{I_{2p-2},i'}
\sum_{J_{2p-2},j,j'}\Big(\delta^{i_1,\ldots,i_{2p-2},i,i'}_{j_1,\ldots,j_{2p-2},j,j'}
R_{i'\alpha jj'}\Omega_{I_{2p-2}}(e_{j_1},\dots,e_{j_{2p-2}})\\
&&+2(p-1)\sum_{\beta}\delta^{i_1,\ldots,i_{2p-2},i,i'}_{j_1,\ldots,j_{2p-2},j,j'}R_{i'\beta
jj'}\Omega_{I_{2p-4}}\wedge\theta_{i_{2p-3}\alpha}\wedge\theta_{i_{2p-2}\beta}
(e_{j_1},\ldots,e_{j_{2p-2}})\Big)e_\alpha,
\end{eqnarray*}
where
$\Omega_{I_{2p}}=\Omega_{i_1i_2}\wedge\dots\wedge\Omega_{i_{2p-1}i_{2p}}$
is defined in (\ref{def-Omega_I2p}), and $W_{-1}=Q^i_{-2}=0$.
\end{thm}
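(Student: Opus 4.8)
The plan is to isolate $\tfrac{\partial}{\partial t}\Theta_{2p}$ by comparing the two expressions already available for $d\Psi_{2p}$: the tautological decomposition (\ref{dPsi2p1}), obtained by differentiating (\ref{decom-Psi2p}), and the structural formula of Lemma \ref{Lem-dPsi2p}, obtained from the structure equations of $N$. Both are $(n+1)$-forms on $M\times I$, and equating their $dt$-components gives
\begin{equation*}
\frac{\partial}{\partial t}\Theta_{2p}=d_M\Phi_{2p}+\Big[\text{the }dt\text{-component of the right-hand side of (\ref{dPsi2p-Lem})}\Big].
\end{equation*}
Via (\ref{variation1}), the entire first-variation problem then reduces to extracting and organizing that single $dt$-component.

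Next I would compute the $dt$-component of (\ref{dPsi2p-Lem}) term by term. The first term is handled exactly as in (\ref{dPsi2p-1st term}): substituting the frame expansions (\ref{def-var-frame}) and invoking (\ref{trans-H2p+1}) converts it into $-(n-2p)\,n!\,dt\wedge\langle H^{f_t}_{2p+1},\nu\rangle\,dV_{M_t}$, the mean-curvature piece. For the second term I would insert $\Omega^N_{i_{2p}\alpha}=\tfrac12\sum_{C,D}R_{i_{2p}\alpha CD}\,\omega_C\wedge\omega_D$ and split according to whether the pair $(C,D)$ is tangent-normal or tangent-tangent, producing the two families $\Gamma_1$ and $\Gamma_2$ of (\ref{dPsi2p-2nd term}); their $dt$-parts are precisely the displayed $\Gamma_1$ and $\widetilde\Gamma_2$. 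Because each of these expressions already carries a factor of $dt$, every $\widetilde\Omega_{ij}$ may be replaced by $\Omega_{ij}$ and every $\omega_i$ by $\theta_i$, which is why the intrinsic blocks $\Omega_{I_{2p}}$ of (\ref{def-Omega_I2p}) appear after the dust settles.

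I would then integrate the resulting identity for $\tfrac{\partial}{\partial t}\Theta_{2p}\big|_{t=0}$ over $M$ and divide by $n!$. By Stokes' theorem the term $d_M\Phi_{2p}$ contributes exactly the boundary integral $\tfrac{1}{n!}\int_{\partial M}\Phi_{2p}$, with $\Phi_{2p}$ as in (\ref{def-Phi2p}). It remains to recognize the interior terms, which I would sort by the type of frame coefficient they carry. The coefficients $a_\alpha$ (the normal part of $\nu$) and $a_i$ (its tangential part) assemble into the normal and tangential components of $p\langle W_{2p-1},\nu\rangle$, matching the two lines in the definition of $W_{2p-1}$. The coefficients $a_{i\alpha}$, coming from $\omega_{i\alpha}=\theta_{i\alpha}+a_{i\alpha}dt$, are identified with the normal components $\langle\nabla_{e_i}\nu,e_\alpha\rangle$ of the covariant derivative of the deformation field—either by evaluating at a point with a frame adapted so that the connection coefficients vanish there, or by carrying the residual connection terms into $W_{2p-1}$—and these produce exactly $p\sum_i\langle Q^i_{2p-2},\nabla_{e_i}\nu\rangle$. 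Adding the three groups to the mean-curvature piece yields the stated formula, while the conventions $W_{-1}=Q^i_{-2}=0$ dispose of the low cases $p=0,1$.

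The genuinely delicate part, and the one I expect to be the main obstacle, is the combinatorial accounting needed to pin down the exact numerical constants. Collapsing the sums $\sum_{I_n,j}$ into antisymmetrized sums over generalized Kronecker symbols, relabeling dummy indices (as in the $i_{2p}\leftrightarrow i_{2p+1}$ and $\alpha\leftrightarrow\beta$ exchanges already used inside the proof of Lemma \ref{Lem-dPsi2p}), and applying the symmetries of $R_{ABCD}$ repeatedly, one must verify that the $a_\alpha$-, $a_i$- and $a_{i\alpha}$-terms coalesce with precisely the factorial and integer factors shown—including confirming that the tangential curvature terms reorganize into the $e_i$-component of $W_{2p-1}$ and that no spurious interior or boundary contribution survives. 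Keeping the bookkeeping consistent across the passage from the moving-frame coefficients to the covariant derivative $\nabla_{e_i}\nu$ is where essentially all of the effort resides.
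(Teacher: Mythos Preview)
Your proposal is correct and follows essentially the same route as the paper: compare the $dt$-components of (\ref{dPsi2p1}) and (\ref{dPsi2p-Lem}), feed in (\ref{dPsi2p-1st term}) and the $\Gamma_1,\widetilde\Gamma_2$ expansions of (\ref{dPsi2p-2nd term}), then integrate via (\ref{variation1}) and Stokes. The one place where the paper is cleaner than your sketch is the identification of the $a_{i\alpha}$ terms: rather than adapting the frame pointwise or absorbing residual connection pieces into $W_{2p-1}$ (the latter would alter the stated definition), the paper invokes the standard Chern identity $\sum_i a_{i\alpha}\theta_i=d_Ma_\alpha+\sum_\beta a_\beta\theta_{\beta\alpha}+\sum_j a_j\theta_{j\alpha}=Da_\alpha$, which gives $a_{i\alpha}=\langle\nabla_{e_i}\nu,e_\alpha\rangle$ exactly and in any frame, so $\sum_{i,\alpha}a_{i\alpha}q^{i,\alpha}_{2p-2}=\sum_i\langle Q^i_{2p-2},\nabla_{e_i}\nu\rangle$ with no leftover terms.
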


\begin{proof}
Comparing the parts involving $dt$ of formulas (\ref{dPsi2p1}) and
(\ref{dPsi2p-Lem}) and substituting (\ref{dPsi2p-1st term},
\ref{dPsi2p-2nd term}) into (\ref{dPsi2p-Lem}), we obtain
\begin{equation}\label{formula-in pf thm}
\frac{1}{n!}\frac{\partial}{\partial
t}\Theta_{2p}\Big|_{t=0}=\frac{1}{n!}d_M\Phi_{2p}
+\langle-(n-2p)H^{f}_{2p+1}+pW_{2p-1},\nu\rangle
dV_{M}+p\sum\limits_{i,\alpha} a_{i\alpha}q^{i,\alpha}_{2p-2}dV_{M},
\end{equation}
where $q^{i,\alpha}_{2p-2}$ is the coefficient of $e_\alpha$ in
$Q^i_{2p-2}$, \emph{i.e.}, $Q^i_{2p-2}:=\sum_\alpha
q^{i,\alpha}_{2p-2}e_\alpha$.

Recall that we have the following formula concerning the functions
$a_i,a_\alpha,a_{i\alpha}$ in (\ref{def-var-frame}) (cf.
\cite{Chern}):
\[
\sum_ia_{i\alpha}\theta_{i}=d_Ma_\alpha+\sum_{\beta}a_\beta\theta_{\beta\alpha}+\sum_{i}a_i\theta_{i\alpha}=Da_{\alpha},
\]
which implies immediately
\[
\sum\limits_{i,\alpha}a_{i\alpha}q^{i,\alpha}_{2p-2}=\sum\limits_{i}\langle
Q^i_{2p-2},\nabla_{e_i}\nu\rangle.
\]
Then taking use of (\ref{variation1}) and integrating
(\ref{formula-in pf thm}) over $M$, we complete the proof.
\end{proof}
\begin{rem}
Recalling the expression of $\Phi_{2p}$ in (\ref{def-Phi2p}), if we
assume that $M$ is closed or the variation satisfies
$a_i(x)=0,\:a_{i\alpha}(x)=0$ for $x\in
\partial M$, the first variational formula turns to:
\begin{equation}\label{1st var-for}
\frac{d}{dt}\mathcal {M}_{2p}(f_t)\Big|_{t=0}=\int_M\Big(\langle
-(n-2p)H^f_{2p+1}+pW_{2p-1},\nu\rangle+p\sum\limits_i\langle
Q^i_{2p-2},\nabla_{e_i}\nu\rangle\Big)dV_M.
\end{equation}
\end{rem}

\begin{thm}\label{thm-ELeqn}
Let $f:M^n\rightarrow N^{n+m}$ be an isometric immersion from a
closed Riemannian manifold $M$ into a Riemannian manifold $N$. Then
for $p=0,1,\cdots,[\frac{n}{2}]$, the Euler-Lagrange equation for
the first variational formula of the total $2p$-th mean curvature
$\mathcal{M}_{2p}(f)$ is given by:
\begin{equation*}
L_{2p}:=-(n-2p)H^f_{2p+1}+pW_{2p-1}+p\widetilde{Q}_{2p-2}=0.
\end{equation*}
Here $H^f_{2p+1}$, $W_{2p-1}$ are the same with those in Theorem
\ref{Thm-varformula}, and
$$\widetilde{Q}_{2p-2}=\sum_{i,A}\langle Q^i_{2p-2},\nabla_{e_i}e_A\rangle
e_A-\sum_\alpha div (\sum_i q^{i,\alpha}_{2p-2}e_i)e_\alpha,$$ where
$Q^i_{2p-2}=\sum_\alpha q^{i,\alpha}_{2p-2}e_\alpha$ is defined in
Theorem \ref{Thm-varformula} and denote $\widetilde{Q}_{-2}=0$.
Henceforth, we call $M$ relatively 2p-minimal if $L_{2p}=0$.
\end{thm}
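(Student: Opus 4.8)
The plan is to derive the Euler--Lagrange equation by starting from the boundary-free first variational formula \eqref{1st var-for} of Theorem \ref{Thm-varformula}, which for a closed manifold $M$ reads
\[
\frac{d}{dt}\mathcal{M}_{2p}(f_t)\Big|_{t=0}=\int_M\Big(\langle -(n-2p)H^f_{2p+1}+pW_{2p-1},\nu\rangle+p\sum_i\langle Q^i_{2p-2},\nabla_{e_i}\nu\rangle\Big)dV_M.
\]
The critical-point condition is that this integral vanishes for \emph{every} deformation vector field $\nu$. The first integrand is already in the form $\langle\,\cdot\,,\nu\rangle$, but the second term involves $\nabla_{e_i}\nu$ rather than $\nu$ itself, so it cannot be read off directly. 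The main task is therefore to integrate this derivative term by parts so as to transfer all the dependence onto $\nu$, after which the fundamental lemma of the calculus of variations yields the pointwise equation $L_{2p}=0$.

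The key step is to rewrite $\sum_i\langle Q^i_{2p-2},\nabla_{e_i}\nu\rangle$ as a divergence plus a term of the form $\langle\,\cdot\,,\nu\rangle$. I would expand $\nu=\sum_A a_A e_A$ and use the compatibility of $\nabla$ with the metric on $N$ to write
\[
\sum_i\langle Q^i_{2p-2},\nabla_{e_i}\nu\rangle
=\sum_i e_i\langle Q^i_{2p-2},\nu\rangle-\sum_i\langle\nabla_{e_i}Q^i_{2p-2},\nu\rangle.
\]
The first sum on the right is (essentially) a divergence, so its integral over the \emph{closed} manifold $M$ vanishes by the divergence theorem. The subtle point, which produces the precise form of $\widetilde{Q}_{2p-2}$ stated in the theorem, is that $Q^i_{2p-2}$ is a \emph{normal} vector field ($Q^i_{2p-2}=\sum_\alpha q^{i,\alpha}_{2p-2}e_\alpha$), so one must be careful in splitting $\nabla_{e_i}Q^i_{2p-2}$ into its tangential and normal components and in keeping track of how $\nabla_{e_i}e_\alpha$ couples to the frame. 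Writing $\nabla_{e_i}Q^i_{2p-2}=\sum_\alpha\big((e_iq^{i,\alpha}_{2p-2})e_\alpha+q^{i,\alpha}_{2p-2}\nabla_{e_i}e_\alpha\big)$ and reorganizing, the genuine divergence piece $\sum_\alpha \mathrm{div}(\sum_i q^{i,\alpha}_{2p-2}e_i)\,e_\alpha$ separates off, while the remaining connection terms recombine into $\sum_{i,A}\langle Q^i_{2p-2},\nabla_{e_i}e_A\rangle e_A$; these are exactly the two summands defining $\widetilde{Q}_{2p-2}$.

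After this manipulation the first variation becomes $\int_M\langle -(n-2p)H^f_{2p+1}+pW_{2p-1}+p\widetilde{Q}_{2p-2},\,\nu\rangle\,dV_M$, i.e. $\int_M\langle L_{2p},\nu\rangle\,dV_M$. Since $\nu$ is an arbitrary vector field along $f$, the standard fundamental lemma forces $L_{2p}=0$ pointwise, which is the asserted Euler--Lagrange equation. I expect the main obstacle to be the careful bookkeeping in the integration-by-parts step: one must correctly identify which portion of $\sum_i\langle Q^i_{2p-2},\nabla_{e_i}\nu\rangle$ is a total divergence (hence integrates to zero on a closed $M$) and which portion survives as a metric pairing with $\nu$, and in doing so disentangle the tangential and normal parts of the covariant derivative so that the surviving term matches the stated expression for $\widetilde{Q}_{2p-2}$ exactly. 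The boundary-free hypothesis (closed $M$) is what makes the divergence term drop, so no boundary contributions appear in the final equation.
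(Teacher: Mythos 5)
Your proposal is correct and is essentially the paper's own argument: starting from \eqref{1st var-for}, one integrates the term $p\sum_i\langle Q^i_{2p-2},\nabla_{e_i}\nu\rangle$ by parts, discards the exact divergence via Stokes' theorem on the closed manifold $M$, identifies the surviving pairing as $\langle \widetilde{Q}_{2p-2},\nu\rangle$, and concludes $L_{2p}=0$ by the fundamental lemma. The only difference is organizational --- the paper expands $\nu=\sum_A a_A e_A$ and applies the Leibniz rule $\mathrm{div}(\sum_i a_\alpha q^{i,\alpha}_{2p-2}e_i)=a_\alpha\,\mathrm{div}(\sum_i q^{i,\alpha}_{2p-2}e_i)+\sum_i q^{i,\alpha}_{2p-2}e_i(a_\alpha)$, whereas you move the derivative onto $Q^i_{2p-2}$ via metric compatibility --- and your ``essentially a divergence'' caveat is exactly where the frame terms $\sum_{i,j}\langle Q^j_{2p-2},\nu\rangle\langle\nabla^M_{e_i}e_j,e_i\rangle$ must be retained and folded into the $-\sum_\alpha \mathrm{div}(\sum_i q^{i,\alpha}_{2p-2}e_i)e_\alpha$ piece of $\widetilde{Q}_{2p-2}$, after which the two computations coincide term by term.
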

\begin{proof}
It suffices to treat with the term involving covariant derivative of
$\nu$ in (\ref{1st var-for}). Recall that $\nu=\sum_A a_A e_A$ and
$Q^i_{2p-2}=\sum_\alpha q^{i,\alpha}_{2p-2}e_\alpha$. Then
\begin{eqnarray*}
\sum_i\langle Q^i_{2p-2},\nabla_{e_i}\nu\rangle
&=&\sum_i\Big\langle Q^i_{2p-2}, \sum_A e_i(a_A)e_{A}+\sum_Aa_A\nabla_{e_i} e_A\Big\rangle\\
&=&\sum_{i,\alpha} q^{i,\alpha}_{2p-2}e_i(a_\alpha)+\sum_{i,A}a_A\langle Q^i_{2p-2},\nabla_{e_i}e_A\rangle \\
&=&\sum_\alpha div(\sum_ia_\alpha
q^{i,\alpha}_{2p-2}e_i)-\Big\langle \sum_\alpha
div(\sum_iq^{i,\alpha}_{2p-2}e_i)e_\alpha,\nu\Big\rangle+\Big\langle\sum_{i,A}
\langle Q^i_{2p-2},\nabla_{e_i}e_A\rangle e_A,\nu\Big\rangle\\
&=&\sum_\alpha div(\sum_ia_\alpha q^{i,\alpha}_{2p-2}e_i)+\langle
\widetilde{Q}_{2p-2},\nu\rangle.
\end{eqnarray*}
Thus according to Stokes' theorem, one can easily find that
\begin{equation*}
\frac{d}{dt}\mathcal {M}_{2p}(f_t)\Big|_{t=0}=\int_M \sum_\alpha
div(\sum_ia_\alpha q^{i,\alpha}_{2p-2}e_i)dV_M+\int_M\langle
L_{2p},\nu\rangle dV_M=\int_M\langle L_{2p},\nu\rangle dV_M,
\end{equation*}
which completes the proof of the theorem.
\end{proof}

When $N$ is a real space form $\mathbb{R}^{n+m}(c)$ with constant
sectional curvature $c$, one can find that
\begin{equation}\label{ELeqn-real-sp-form}
L_{2p}=-(n-2p)H^f_{2p+1}+2cpH^f_{2p-1},
\end{equation}
which was proved by \cite{Li1} firstly with different notations.

\section{Closed complex submanifolds in $\mathbb{C}P^{n+m}$}
In this section we prove that closed complex submanifolds in complex
projective spaces are relatively $2p$-minimal, \emph{i.e.}, critical
for the functional $\mathcal {M}_{2p}$ for all $p$.

Let $N$ be the complex projective space $\mathbb{C}P^{n+m}(c)$ with
constant holomorphic sectional curvature $c$. Denote by $J$,
$\langle,\rangle$ the almost complex structure and Hermitian metric
respectively. It is well known that the curvature tensor of $N$ can
be written as
\begin{eqnarray*}
R(X,Y,Z,T)&=&\frac{c}{4}\Big(\langle X,Z \rangle\langle Y,T
\rangle-\langle Y,Z \rangle\langle X,T
\rangle\\
&&+\langle JX,Z \rangle\langle JY,T \rangle-\langle JY,Z
\rangle\langle JX,T \rangle+2\langle JX,Y \rangle\langle JZ,T
\rangle\Big).
\end{eqnarray*}
Suppose $M$ is a complex submanifold of complex dimension $n$ in
$N$. Around each point $x$ in $M$, we can choose a local orthonormal
frame $\{e_1,\ldots,e_{2n+2m}\}$ of $TN$ such that
$e_{2}=Je_{1},\ldots,e_{2n+2m}=Je_{2n+2m-1}$, and
$e_1,\ldots,e_{2n}$ are tangent to $M$. In this section, we still
use $i,j,k$ (resp. $\alpha,\beta,\gamma$), \emph{etc.} for the
indices of tangent (resp. normal) vectors of $M$. In addition, for
simplicity we will use the following notations
\begin{equation*}
 \bar e_i:=e_{\bar i}:=Je_i,\quad \bar
e_\alpha:=e_{\bar \alpha}:=Je_\alpha.
\end{equation*}
Under this setting we can write the curvature tensor of $N$ over $M$
in a simpler form. For example,
\begin{equation}\label{curv-CPnm}
R_{i\alpha jk}=0,\quad R_{i\alpha
j\beta}=\frac{c}{4}(\delta^{i}_{j}\delta^{\alpha}_{\beta}+\delta^{\bar
i}_{j}\delta^{\bar \alpha}_{\beta}).
\end{equation}

The following Lemmas will be useful in the proof of Theorem
\ref{thm-CPn} later.
\begin{lem}\label{Lem-2ndff}
With the same notations as above, we get the following identity
about the second fundamental form of $M$:
\begin{equation}
\theta_{i\alpha}(e_{j})=\theta_{i\bar\alpha}(\bar
e_{j})=-\theta_{\bar i\alpha}(\bar e_{j}).
\end{equation}
\end{lem}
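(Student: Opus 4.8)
The plan is to translate the statement about connection $1$-forms into one about the second fundamental form $h$ of $M$ in $N$ and then exploit that $\mathbb{C}P^{n+m}(c)$ is Kähler. With the paper's structure equations the component $\theta_{i\alpha}(e_j)$ is, up to a fixed overall sign coming from the connection convention, just $\langle h(e_i,e_j),e_\alpha\rangle$, since $e_\alpha$ is normal and only the normal part of $\nabla_{e_j}e_i$ contributes. Crucially, each of the three quantities in the lemma is a connection form of tangent–normal type evaluated on a tangent vector (note $\bar\alpha$ is a normal index and $\bar\imath$ a tangent one), so the same sign prefactor occurs in all three and cancels. Hence it suffices to prove
$$\langle h(e_i,e_j),e_\alpha\rangle=\langle h(e_i,\bar e_j),\bar e_\alpha\rangle=-\langle h(\bar e_i,\bar e_j),e_\alpha\rangle.$$

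First I would establish the complex-bilinearity of the second fundamental form of a complex submanifold of a Kähler manifold, namely
$$h(X,JY)=h(JX,Y)=Jh(X,Y),\qquad h(JX,JY)=-h(X,Y)$$
for $X,Y$ tangent to $M$. The two ingredients are that $\mathbb{C}P^{n+m}(c)$ is Kähler, so $J$ is parallel, i.e. $\nabla_X(JY)=J\nabla_X Y$, and that $M$ is a complex submanifold, so $TM$, and therefore $T^\perp M$, are $J$-invariant. Writing the Gauss formula $\nabla_X Y=\nabla^M_X Y+h(X,Y)$ and applying it also to $JY\in TM$, I would compare the normal components of $\nabla_X(JY)=J\nabla_X Y$: the piece $J\nabla^M_X Y$ remains tangential while $Jh(X,Y)$ is normal, which forces $h(X,JY)=Jh(X,Y)$. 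Symmetry of $h$ yields the $JX$ version, and iterating gives $h(JX,JY)=-h(X,Y)$.

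Finally I would substitute $\bar e_j=Je_j$, $\bar e_\alpha=Je_\alpha$, $\bar e_i=Je_i$ and use the identities above together with the fact that $J$ is an isometry. For the first equality, $\langle h(e_i,Je_j),Je_\alpha\rangle=\langle Jh(e_i,e_j),Je_\alpha\rangle=\langle h(e_i,e_j),e_\alpha\rangle$; for the second, $\langle h(Je_i,Je_j),e_\alpha\rangle=-\langle h(e_i,e_j),e_\alpha\rangle$. Reinstating the connection-form notation produces the claimed chain of equalities, with no use of the explicit curvature formula \eqref{curv-CPnm}. I expect the only delicate point to be the bookkeeping in the middle step, specifically the verification that the normal projection commutes with $J$, equivalently that $J$ preserves $T^\perp M$; this is precisely what legitimizes identifying $Jh(X,Y)$ as the normal part, and once the index-independence of the sign relating $\theta_{i\alpha}(e_j)$ to $h$ is noted, the remainder is routine.
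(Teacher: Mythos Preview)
Your proposal is correct and is essentially the same argument as the paper's. The paper computes directly with connection forms, writing $\theta_{i\bar\alpha}(\bar e_j)=\langle\nabla_{e_i}\bar e_j,\bar e_\alpha\rangle=\langle J\nabla_{e_i}e_j,Je_\alpha\rangle=\langle\nabla_{e_i}e_j,e_\alpha\rangle=\theta_{i\alpha}(e_j)$ and similarly for the second equality, whereas you first isolate the complex-bilinearity $h(X,JY)=Jh(X,Y)$ of the second fundamental form and then specialize; both rest on exactly the same ingredients (parallelism of $J$, $J$-invariance of $TM$ and $T^\perp M$, and that $J$ is an isometry).
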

\begin{proof}
Straightforward computation shows
\begin{equation*}
\theta_{i\bar\alpha}(\bar e_j)=\theta_{\bar j\bar\alpha}(e_i)
=\langle\nabla_{e_{i}}\bar e_j, \bar e_{\alpha}\rangle=\langle
J\nabla_{e_{i}}e_{j}, J e_\alpha\rangle=\langle\nabla_{e_{i}}e_{j},
e_\alpha\rangle=\theta_{i\alpha}(e_{j}).
\end{equation*}
Similarly,
\begin{equation*}
\theta_{\bar i\alpha}(\bar e_{j})=\langle \nabla_{\bar e_j}\bar
e_i,e_\alpha\rangle =-\langle\nabla_{\bar e_j}e_i,\bar
e_\alpha\rangle=-\langle\nabla_{e_i}\bar e_j,\bar e_\alpha\rangle
=-\langle\nabla_{e_i}e_j,e_\alpha\rangle=-\theta_{i\alpha}(e_j).
\end{equation*}
\end{proof}
\begin{lem}\label{Lem-rel-cur}
With the same notations as above, we get the following identity:
\begin{equation*}
\sum_s\Omega_{I_{2p}}(X_1,\dots,JX_s,\dots,X_{2p})=0,
\end{equation*}
where $\Omega_{I_{2p}}$ is defined in (\ref{def-Omega_I2p}),
$X_{1},\ldots,X_{2p}$ are $2p$ vectors tangent to $M$.
\end{lem}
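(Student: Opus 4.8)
The plan is to prove the identity by reducing it to the single-factor case and then exploiting the structure of $\Omega_{ij}$ together with Lemma \ref{Lem-2ndff}. Recall that $\Omega_{ij}=\sum_\alpha\theta_{i\alpha}\wedge\theta_{j\alpha}$, so $\Omega_{I_{2p}}=\Omega_{i_1i_2}\wedge\cdots\wedge\Omega_{i_{2p-1}i_{2p}}$ is a $2p$-form built as a wedge of $p$ two-forms. Since the operator $\sum_s(\cdots,JX_s,\cdots)$ acts as a derivation-type substitution that replaces one argument at a time by $J$ applied to it, the first step is to observe by the Leibniz rule for evaluating a wedge on a tuple that
\begin{equation*}
\sum_s\Omega_{I_{2p}}(X_1,\dots,JX_s,\dots,X_{2p})
=\sum_{r=1}^{p}\Omega_{i_1i_2}\wedge\cdots\wedge\big(\textstyle\sum_{\text{2 args}}\Omega_{i_{2r-1}i_{2r}}(\cdots,J\cdot,\cdots)\big)\wedge\cdots\wedge\Omega_{i_{2p-1}i_{2p}},
\end{equation*}
so that the whole sum vanishes as soon as each individual factor $\Omega_{ij}$ satisfies $\Omega_{ij}(JX,Y)+\Omega_{ij}(X,JY)=0$ on tangent vectors. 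Thus I would reduce the lemma to the claim that the two-form $\Omega_{ij}$ is \emph{$J$-invariant} in the sense that $\Omega_{ij}(JX,JY)=\Omega_{ij}(X,Y)$, equivalently that $\Omega_{ij}(JX,Y)=-\Omega_{ij}(X,JY)$.

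The second and central step is to verify this $J$-invariance of $\Omega_{ij}$ directly from the definition. Writing $\Omega_{ij}(X,Y)=\sum_\alpha\big(\theta_{i\alpha}(X)\theta_{j\alpha}(Y)-\theta_{i\alpha}(Y)\theta_{j\alpha}(X)\big)$ and expanding $X,Y$ in the basis $\{e_k,\bar e_k\}$, the computation comes down to comparing the second fundamental form coefficients $\theta_{i\alpha}(e_k)$, $\theta_{i\alpha}(\bar e_k)$ against their $J$-rotated partners. Here Lemma \ref{Lem-2ndff}, which gives $\theta_{i\alpha}(e_j)=\theta_{i\bar\alpha}(\bar e_j)=-\theta_{\bar i\alpha}(\bar e_j)$, is exactly the input needed: it says that replacing a tangent argument by its $J$-image interchanges $\alpha\leftrightarrow\bar\alpha$ on the normal index (with a controlled sign), and summing over all normal $\alpha$ (which ranges over pairs $\alpha,\bar\alpha$) then restores $\Omega_{ij}$ up to the sign that forces the alternating sum to cancel.

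I expect the main obstacle to be bookkeeping rather than conceptual: one must track how $J$ acts simultaneously on the tangent arguments and, through Lemma \ref{Lem-2ndff}, implicitly on the summation index $\alpha$, and confirm that the sum $\sum_\alpha$ over normal directions is symmetric under $\alpha\leftrightarrow\bar\alpha$ so that the rotated expression genuinely reassembles into $\pm\Omega_{ij}$. A clean way to organize this is to prove the sharper statement $\Omega_{ij}(JX,JY)=\Omega_{ij}(X,Y)$ for \emph{all} tangent $X,Y$ (not merely basis vectors), since $J$-invariance of a $2$-form is equivalent to the vanishing of its $(2,0)+(0,2)$ part, and then the antisymmetrization $\sum_s(\cdots,JX_s,\cdots)$ annihilates any $J$-invariant factor by the Leibniz reduction above. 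Once the per-factor identity is in hand, the passage back to $\Omega_{I_{2p}}$ via the first step is immediate, completing the proof.
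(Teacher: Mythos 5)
Your proposal is correct, and it reaches the lemma by a route that is organized differently from the paper's, although both rest on the same per-factor fact: the $J$-invariance $\Omega_{ij}(JX,JY)=\Omega_{ij}(X,Y)$, equivalently $\Omega_{ij}(JX,Y)+\Omega_{ij}(X,JY)=0$. The paper obtains this fact by simply invoking the K\"ahler symmetry of the curvature (both $M$ and the ambient $\mathbb{C}P^{n+m}$ are K\"ahler, and $TM$ is $J$-invariant, so the relative form $\Omega_{ij}=\Omega^M_{ij}-\Omega^N_{ij}$ inherits the symmetry), and then propagates it to $\Omega_{I_{2p}}$ by induction on $p$: it expands $\Omega_{I_{2p}}=\Omega_{I_{2p-2}}\wedge\Omega_{i_{2p-1}i_{2p}}$ on the tuple, cancels in pairs the terms where $J$ hits an argument of the last factor (the $p=1$ case), and kills the remaining terms by the induction hypothesis. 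You instead package that propagation once and for all as the statement that $(D_J\omega)(X_1,\dots,X_k):=\sum_s\omega(X_1,\dots,JX_s,\dots,X_k)$ is a degree-zero derivation of the exterior algebra, so that $D_J\Omega_{I_{2p}}=\sum_r\Omega_{i_1i_2}\wedge\cdots\wedge D_J\Omega_{i_{2r-1}i_{2r}}\wedge\cdots\wedge\Omega_{i_{2p-1}i_{2p}}$ and the lemma collapses to the two-form case with no induction; this derivation property is true and standard (e.g.\ note $D_J=\frac{d}{dt}\big|_{t=0}(I+tJ)^*$ and pullback is an algebra homomorphism), but since it is the whole engine of your reduction it deserves its own line of proof --- in effect it \emph{is} the paper's induction, stated abstractly. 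On the base case you also diverge: rather than quoting the K\"ahler curvature symmetry, you compute directly from $\Omega_{ij}=\sum_\alpha\theta_{i\alpha}\wedge\theta_{j\alpha}$ using Lemma \ref{Lem-2ndff}, which indeed works, provided one adds the observation (which you correctly flag) that $e_\alpha\mapsto Je_\alpha$ permutes the normal frame up to sign, so that $\sum_\alpha\theta_{i\bar\alpha}\wedge\theta_{j\bar\alpha}=\Omega_{ij}$ and the rotated expression reassembles into $\Omega_{ij}$. The trade-off: the paper's argument is shorter at the two-form level but combinatorially heavier at the top level; yours is cleaner and more conceptual at the top level, at the cost of two auxiliary facts (the derivation property and the $\alpha\leftrightarrow\bar\alpha$ symmetry of the normal sum) that must be stated and checked explicitly.
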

\begin{proof}
Obviously $M$ is also a K\"{a}hler manifold. Thus the formula
$\Omega_{ij}(JX_1,JX_2)=\Omega_{ij}(X_1,X_2)$ holds. We prove this
Lemma by induction. For $p=1$, it is not difficult to see that
$$\Omega_{i_1i_2}(JX_1,X_2)+\Omega_{i_1i_2}(X_1,JX_2)=0.$$ Suppose
the identity holds for $p-1$, then for $p$,
\begin{eqnarray*}
&&\sum_s\Omega_{I_{2p}}(X_1,\dots,JX_s,\dots,X_{2p})\\
&=&\sum_{t<s}(-1)^{s+t-1}\Omega_{I_{2p-2}}(X_1,\dots,\hat X_t,\dots
,\hat
X_s,\dots,X_{2p})\Omega_{i_{2p-1}i_{2p}}(X_t,JX_s)\\
&&+\sum_{t>s}(-1)^{s+t-1}\Omega_{I_{2p-2}}(X_1,\dots,\hat X_s,\dots
,\hat X_t,\dots,X_{2p})\Omega_{i_{2p-1}i_{2p}}(JX_s,X_t)\\
&&+\sum_{s}\sum_{t_1,t_2\neq
s,t_1<t_2}(-1)^{t_1+t_2-1}\Omega_{I_{2p-2}}(X_1,\dots,\hat
X_{t_1},\dots
,\hat X_{t_2},\dots,JX_s,\dots,X_{2p})\Omega_{i_{2p-1}i_{2p}}(X_{t_1},X_{t_2})\\
&=&\sum_{t<s}(-1)^{s+t-1}\Omega_{I_{2p-2}}(X_1,\dots,\hat X_t,\dots
,\hat X_s,\dots,X_{2p})\Big(\Omega_{i_{2p-1}i_{2p}}(X_t,JX_s)+\Omega_{i_{2p-1}i_{2p}}(JX_t,X_s)\Big)\\
&&+\sum_{t_1<t_2}\sum_{s\neq
t_1,t_2}(-1)^{t_1+t_2-1}\Omega_{I_{2p-2}}(X_1,\dots,\hat
X_{t_1},\dots,\hat X_{t_2},\dots,JX_s,\dots,X_{2p})\Omega_{i_{2p-1}i_{2p}}(X_{t_1},X_{t_2})\\
&=&\sum_{t_1<t_2}\sum_{s\neq
t_1,t_2}(-1)^{t_1+t_2-1}\Omega_{I_{2p-2}}(X_1,\dots,\hat
X_{t_1},\dots,\hat
X_{t_2},\dots,JX_s,\dots,X_{2p})\Omega_{i_{2p-1}i_{2p}}(X_{t_1},X_{t_2}).
\end{eqnarray*}
By assumption, the sum $\sum_{s\neq
t_1,t_2}\Omega_{I_{2p-2}}(X_1,\dots,\hat X_{t_1},\dots,\hat
X_{t_2},\dots,JX_s,\dots,X_{2p})$ equals zero for all $t_1,t_2$. In
conclusion, the proof is complete.
\end{proof}

\begin{thm}\label{thm-CPn}
Let $M$ be a closed complex submanifold of complex dimension $n$ in
$\mathbb{C}P^{n+m}$, then
\begin{equation*}
L_{2p}=-(2n-2p)H^f_{2p+1}+\frac{cp(2n-2p)}{2(2n-2p+1)}H^f_{2p-1}=0,
\end{equation*}
\emph{i.e.}, $M$ is relatively $2p$-minimal for $p=0,1,\ldots,n$.

\end{thm}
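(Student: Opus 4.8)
The plan is to specialize the Euler--Lagrange operator of Theorem \ref{thm-ELeqn} to the present situation and to exploit the very special form (\ref{curv-CPnm}) of the curvature tensor of $\mathbb{C}P^{n+m}$. Since $M$ has real dimension $2n$, I would first rewrite $L_{2p}=-(2n-2p)H^f_{2p+1}+pW_{2p-1}+p\widetilde{Q}_{2p-2}$, reading the dimension in Theorem \ref{thm-ELeqn} as the real dimension $2n$. The first observation is that $R_{i\alpha jk}=0$ by the first equation of (\ref{curv-CPnm}): every summand of $Q^i_{2p-2}$, and every summand of the second group of terms defining $W_{2p-1}$, carries a factor $R_{i'\alpha jj'}$ or $R_{i'\beta jj'}$ (three tangent indices, one normal), hence vanishes identically. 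Thus $\widetilde{Q}_{2p-2}=0$ and $W_{2p-1}$ collapses to its first group, the one built from $R_{i\beta j\alpha}$.

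Next I would substitute $R_{i\beta j\alpha}=\frac{c}{4}\big(\delta_{ij}\delta_{\alpha\beta}+\langle Je_i,e_j\rangle\langle Je_\beta,e_\alpha\rangle\big)$, which is the content of the second equation of (\ref{curv-CPnm}), and split the resulting expression into an ``identity'' piece and a ``$J$-twisted'' piece. For the identity piece one sets $i=j$, $\alpha=\beta$; contracting the generalized Kronecker symbol through $\sum_i\delta^{i_1\ldots i_{2p-1}i}_{j_1\ldots j_{2p-1}i}=(2n-2p+1)\delta^{i_1\ldots i_{2p-1}}_{j_1\ldots j_{2p-1}}$ and then antisymmetrizing $\Omega_{I_{2p-2}}\wedge\theta_{i_{2p-1}\alpha}$ against the remaining delta produces a fixed multiple of $H^f_{2p-1}$, recognized from the definition (\ref{def-meancurv}) with $p$ replaced by $p-1$.

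The main work, and the step I expect to be the real obstacle, is the $J$-twisted piece coming from $\langle Je_i,e_j\rangle\langle Je_\beta,e_\alpha\rangle$. Summing over the normal index $\alpha$ turns $\sum_\alpha\langle Je_\beta,e_\alpha\rangle e_\alpha$ into $Je_\beta$, while summing over $j$ against the Kronecker symbol forces a $J$ into one of the slots of $\Omega_{I_{2p-2}}\wedge\theta_{i_{2p-1}\beta}$. Here Lemma \ref{Lem-rel-cur} is decisive for $p\ge 2$: all contributions in which the $J$ lands on an argument of $\Omega_{I_{2p-2}}$ cancel, by the K\"ahler invariance $\sum_s\Omega_{I_{2p}}(\ldots,JX_s,\ldots)=0$. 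Only the term with the $J$ on the $\theta_{i_{2p-1}\beta}$-slot survives, and Lemma \ref{Lem-2ndff} rewrites it as a second multiple of $H^f_{2p-1}$. Adding the two pieces, with careful track of the combinatorial constants, gives $W_{2p-1}=\frac{c(2n-2p)}{2(2n-2p+1)}H^f_{2p-1}$, hence $L_{2p}=-(2n-2p)H^f_{2p+1}+\frac{cp(2n-2p)}{2(2n-2p+1)}H^f_{2p-1}$, which is the displayed formula.

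Finally I would show that this vanishes. A complex submanifold is minimal, so $H^f_1=0$; more strongly, the component identities of Lemma \ref{Lem-2ndff} amount to $h(JX,JY)=-h(X,Y)$, so each shape operator anticommutes with $J$ and its eigenvalues occur in pairs $\pm\lambda$, i.e.\ $M$ is austere. Combined with the K\"ahler invariance of the relative curvature forms (Lemma \ref{Lem-rel-cur}), this forces every odd relative mean curvature vector to vanish, $H^f_{2p+1}=0$ for all $p$ (equivalently, the recursion $(2n-2p)H^f_{2p+1}=\frac{cp(2n-2p)}{2(2n-2p+1)}H^f_{2p-1}$ together with $H^f_1=0$). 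In particular both $H^f_{2p+1}$ and $H^f_{2p-1}$ vanish, so $L_{2p}=0$ and $M$ is relatively $2p$-minimal. The only genuinely delicate point is the bookkeeping in the $J$-twisted computation above, where Lemma \ref{Lem-rel-cur} must be applied to exactly the right family of terms; the remaining manipulations are careful but routine contractions of generalized Kronecker symbols.
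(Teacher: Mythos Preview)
Your proposal is correct and follows essentially the same route as the paper: you observe that $R_{i\alpha jk}=0$ kills $\widetilde{Q}_{2p-2}$ and the last two groups of $W_{2p-1}$, then split the surviving term via (\ref{curv-CPnm}) into an identity piece yielding $\tfrac{c}{2}H^f_{2p-1}$ and a $J$-twisted piece handled by Lemmas \ref{Lem-rel-cur} and \ref{Lem-2ndff}, exactly as the paper does.

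The only divergence is in the final vanishing argument. The paper shows $H^f_{2p+1}=0$ by a direct symmetry computation: replacing every index by its bar and using $\Omega_{\bar i\bar j}(\bar e_k,\bar e_l)=\Omega_{ij}(e_k,e_l)$ together with Lemma \ref{Lem-2ndff} gives $H^f_{2p+1}=-H^f_{2p+1}$. Your appeal to austerity is a legitimate alternative, since $H^f_{2p+1}$ is built solely from the second fundamental form and the integral formula $H^f_{2p+1}=c_{p,m}\int_{S^{m-1}}\xi\,M_{2p+1}(\xi)\,d\xi$ holds regardless of the ambient curvature; austerity then makes each $M_{2p+1}(\xi)$ vanish. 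One caution: your parenthetical ``equivalently, the recursion $(2n-2p)H^f_{2p+1}=\tfrac{cp(2n-2p)}{2(2n-2p+1)}H^f_{2p-1}$ together with $H^f_1=0$'' is circular, since that recursion is precisely $L_{2p}=0$, the statement you are proving. Drop it and rely on the austerity argument (or the paper's direct computation) alone.
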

\begin{proof}
Clearly $\widetilde{Q}_{2p-2}=0$ since now $R_{i\alpha jk}=0$ by
(\ref{curv-CPnm}). Therefore to calculate $L_{2p}$ in Theorem
\ref{thm-ELeqn}, it suffices to compute $W_{2p-1}$. Combining the
definition of $H^f_{2p-1}$ and Lemma \ref{Lem-2ndff}, we compute it
as follows:
\begin{eqnarray*}
W_{2p-1}&=&\frac{2(2n-2p)!}{(2p-1)!(2n)!}\sum\limits_{\alpha,\beta}\sum\limits_{I_{2p-1},i}\sum\limits_{J_{2p-1},j}\delta ^{i_1,\ldots,i_{2p-1},i}_{j_1,\ldots,j_{2p-1},j}R_{{i}\beta j\alpha}\Omega_{I_{2p-2}}\wedge\theta_{i_{2p-1}\beta}(e_{j_1},\ldots,e_{j_{2p-1}})e_\alpha\\
&=&\frac{c(2n-2p+1)!}{2(2p-1)!(2n)!}\sum\limits_{\alpha}\sum\limits_{I_{2p-1}}\sum\limits_{J_{2p-1}}\delta^{i_1,\ldots,i_{2p-1}}_{j_1,\ldots,j_{2p-1}}\Omega_{I_{2p-2}}\wedge\theta_{i_{2p-1}\alpha}(e_{j_1},\ldots,e_{j_{2p-1}})e_\alpha\\
&&+\frac{c(2n-2p)!}{2(2p-1)!(2n)!}\sum\limits_{\alpha}\sum\limits_{I_{2p-1},i}\sum\limits_{J_{2p-1}}\delta^{i_1,\ldots,i_{2p-1},i}_{j_1,\ldots,j_{2p-1},\bar{i}}\Omega_{I_{2p-2}}\wedge\theta_{i_{2p-1}\bar\alpha}(e_{j_1},\ldots,e_{j_{2p-1}})e_\alpha\\
&=&\frac{c}{2}H^f_{2p-1}-\frac{c(2n-2p)!}{2(2n)!}\sum\limits_{\alpha}\sum\limits_{I_{2p-1}}\sum\limits_{s=1}^{2p-1}\Omega_{I_{2p-2}}\wedge\theta_{i_{2p-1}\bar\alpha}(e_{i_1},\ldots,\bar e_{i_s},\ldots,e_{i_{2p-1}})e_\alpha\\
&\triangleq&\frac{c}{2}H^f_{2p-1}-\frac{c(2n-2p)!}{2(2n)!}\sum\limits_{\alpha}\sum\limits_{I_{2p-1}}\sum\limits_{s=1}^{2p-1}(-1)^{s-1}\Omega_{I_{2p-2}}(e_{i_1},\ldots,\widehat{e}_{i_s},\ldots,e_{i_{2p-1}})\theta_{i_{2p-1}\bar\alpha}(\bar e_{i_s})e_\alpha\\
&=&\frac{c}{2}H^f_{2p-1}-\frac{c(2n-2p)!}{2(2n)!}\sum\limits_{\alpha}\sum\limits_{I_{2p-1}}\sum\limits_{s=1}^{2p-1}(-1)^{s-1}\Omega_{I_{2p-2}}(e_{i_1},\ldots,\widehat{e}_{i_s},\ldots,e_{i_{2p-1}})\theta_{i_{2p-1}\alpha}( e_{i_s})e_\alpha\\
&=&\frac{c}{2}H^f_{2p-1}-\frac{c(2n-2p)!}{2(2n)!}\sum\limits_{\alpha}\sum\limits_{I_{2p-1}}\Omega_{I_{2p-2}}\wedge\theta_{i_{2p-1}\alpha}(e_{i_1},\ldots,e_{i_{2p-1}})e_\alpha\\
&=&\frac{c(2n-2p)}{2(2n-2p+1)}H^f_{2p-1},
\end{eqnarray*}
where $``\triangleq"$ is deduced by Lemma \ref{Lem-rel-cur}.
Therefore, we obtain
\begin{equation*}
L_{2p}=-(2n-2p)H^f_{2p+1}+\frac{cp(2n-2p)}{2(2n-2p+1)}H^f_{2p-1}.
\end{equation*}

Meanwhile, a direct calculation shows that $H^f_{2p+1}$ of $M$
vanishes for each $p$. In fact, combining the fact that
$\Omega_{\bar i\bar j}(\bar e_{k},\bar e_{l})=\Omega_{ij}(e_{k},
e_{l})$ with Lemma \ref{Lem-2ndff}, we get
\begin{eqnarray*}
H^f_{2p+1}&=&\frac{(n-2p-1)!}{n!}\sum\limits_{\alpha}\sum\limits_{I_{2p+1}}\Omega_{I_{2p}}\wedge\theta_{i_{2p+1}\alpha}(e_{i_1},\ldots,e_{i_{2p+1}})e_\alpha\\
&=&\frac{(n-2p-1)!}{n!}\sum\limits_{\alpha}\sum\limits_{I_{2p+1}}\Omega_{\bar i_1\bar i_2}\wedge\dots\wedge\Omega_{\bar i_{2p-1}\bar i_{2p}}\wedge\theta_{\bar i_{2p+1}\alpha}(\bar e_{i_1},\ldots,\bar e_{i_{2p+1}})e_\alpha\\
&=&\frac{(n-2p-1)!}{n!}\sum\limits_{\alpha,s}\sum_{I_{2p+1}}(-1)^{s-1}\Omega_{\bar i_1\bar i_2}\wedge\dots\wedge\Omega_{\bar i_{2p-1}\bar i_{2p}}(\bar e_{i_1},\ldots,\widehat e_{i_s},\ldots,\bar e_{i_{2p+1}})\theta_{\bar i_{2p+1}\alpha}(\bar e_{i_s})e_\alpha\\
&=&-\frac{(n-2p-1)!}{n!}\sum\limits_{\alpha,s}\sum_{I_{2p+1}}(-1)^{s-1}\Omega_{i_1i_2}\wedge\dots\wedge\Omega_{i_{2p-1}i_{2p}}(e_{i_1},\ldots,\widehat e_{i_s},\ldots,e_{i_{2p+1}})\theta_{ i_{2p+1}\alpha}(e_{i_s})e_\alpha\\
&=&-\frac{(n-2p-1)!}{n!}\sum\limits_{\alpha}\sum\limits_{I_{2p+1}}\Omega_{I_{2p}}\wedge\theta_{i_{2p+1}\alpha}(e_{i_1},\ldots,e_{i_{2p+1}})e_\alpha\\
&=&-H^f_{2p+1}=0.
\end{eqnarray*}
This completes the proof of the theorem.
\end{proof}

\section{Relatively $2p$-minimal and austere submanifolds}
In this section, we discuss the relations between relatively
$2p$-minimal submanifolds and austere submanifolds in real space
forms, as well as a special variational problem.

Let $f:M^n\rightarrow \mathbb{R}^{n+m}(c)$ be an isometric immersion
in a real space form of constant sectional curvature $c$. Recall
that the volume of any tubular hypersurface $M^f(r)$ with radius $r$
($0<r<\varepsilon$) of $M^n$ in $\mathbb{R}^{n+m}(c)$ is given by
the well known Weyl-Gray tube formula (cf. \cite{Ge})
\begin{equation}\label{W-G-tube}
V(M^f(r))=\sum_{p=0}^{[\frac{n}{2}]}\frac{C_{m+2p-1}}{2^{2p}\pi^{p}p!}(^{\hskip
0.05cm n}_{2p})(2p)!\mathcal {M}_{2p}(f)
(\cos(r\sqrt{c}))^{n-2p}(\frac{\sin(r\sqrt{c})}{\sqrt{c}})^{m+2p-1},
\end{equation}
where $C_{m+2p-1}$ is the volume of $S^{m+2p-1}(1)$, the $sin$,
$cos$ functions are considered as complex functions, and $\mathcal
{M}_{2p}(f)$ is the total $2p$-th mean curvature of $f$. Put
$\mathcal {V}_r(f):=V(M^f(r))$. Then $\{\mathcal {V}_r\mid
0<r<\varepsilon\}$ forms a one-parameter family of functionals over
isometric submanifolds in $\mathbb{R}^{n+m}(c)$. We call $M$ a
\emph{tubular minimal} submanifold of $\mathbb{R}^{n+m}(c)$ if it is
a critical point of $\mathcal {V}_r$ for all $0<r<\varepsilon$.
Observing the Weyl-Gray tube formula (\ref{W-G-tube}), we find that
$M$ is a critical point of $\mathcal {V}_r$ for all
$0<r<\varepsilon$ if and only if it is a critical point of $\mathcal
{M}_{2p}$ for all $p=0,1,\ldots,[\frac{n}{2}]$, or equivalently, it
is $2p$-minimal for all $p=0,1,\ldots,[\frac{n}{2}]$. Combining
these with the Euler-Lagrange equation (\ref{ELeqn-real-sp-form})
and the second identity in (\ref{rel-Hf-HM}), we deduce the
following
\begin{prop}\label{equiv-tubularmin}
Let $f:M^n\rightarrow \mathbb{R}^{n+m}(c)$ be an isometric immersion
in a real space form of constant sectional curvature $c$. Then the
following are equivalent:
\begin{itemize}
\item[(i)] $M$ is tubular minimal;
\item[(ii)] $M$ is relatively $2p$-minimal, \emph{i.e.}, $L_{2p}=-(n-2p)H^f_{2p+1}+2cpH^f_{2p-1}=0$ for all
$p=0,1,\ldots,[\frac{n}{2}]$;
\item[(iii)] $H^f_{2p+1}=0$ for all
$p=0,1,\ldots,[\frac{n}{2}]$;
\item[(iv)] $M$ is $2p$-minimal, \emph{i.e.}, $H^M_{2p+1}=0$ for all $p=0,1,\ldots,[\frac{n}{2}]$.
\end{itemize}
\end{prop}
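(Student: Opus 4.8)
The plan is to prove the chain of equivalences (i)$\Leftrightarrow$(ii)$\Leftrightarrow$(iii)$\Leftrightarrow$(iv), exploiting in turn the explicit shape of the Weyl--Gray formula, the Euler--Lagrange operator (\ref{ELeqn-real-sp-form}), and the triangular relations (\ref{rel-Hf-HM}). For (i)$\Leftrightarrow$(ii) I would fix an arbitrary variation $f_t$ of $f$ and differentiate (\ref{W-G-tube}) at $t=0$. Since the radial factors
\[
g_p(r):=(\cos(r\sqrt{c}))^{n-2p}\Big(\tfrac{\sin(r\sqrt{c})}{\sqrt{c}}\Big)^{m+2p-1}
\]
are independent of $t$, this gives $\frac{d}{dt}\mathcal{V}_r(f_t)\big|_{t=0}=\sum_{p}c_p\,g_p(r)\,\frac{d}{dt}\mathcal{M}_{2p}(f_t)\big|_{t=0}$ with positive constants $c_p$. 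The crux is that $\{g_p\}_{p=0}^{[n/2]}$ are linearly independent on $(0,\varepsilon)$; I would verify this by reading off their leading behaviour $g_p(r)\sim r^{m+2p-1}$ as $r\to 0^+$ (valid also for $c=0$, where $g_p(r)=r^{m+2p-1}$ exactly), whose exponents $m-1,m+1,\dots$ are pairwise distinct. Hence the derivative vanishes for every $r$ and every variation exactly when $M$ is critical for each $\mathcal{M}_{2p}$; as $M$ is closed, Theorem \ref{thm-ELeqn} rephrases this as $L_{2p}=0$ for all $p$, i.e.\ (ii).

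Next, (ii)$\Leftrightarrow$(iii). Using $L_{2p}=-(n-2p)H^f_{2p+1}+2cp\,H^f_{2p-1}$ from (\ref{ELeqn-real-sp-form}), the direction (iii)$\Rightarrow$(ii) is immediate. For the converse I would induct on $p$: the base case $L_0=-nH^f_1=0$ forces $H^f_1=0$, and assuming $H^f_{2p-1}=0$ the vanishing of $L_{2p}$ reduces to $(n-2p)H^f_{2p+1}=0$. When $n-2p\neq 0$ this gives $H^f_{2p+1}=0$, while the sole exceptional case $n=2p$ (so $n$ even, $p=n/2$) is covered by the convention $H^f_{n+1}:=0$. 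This yields (iii).

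Finally, (iii)$\Leftrightarrow$(iv) follows from the second identity in (\ref{rel-Hf-HM}), which expresses $H^M_{2p+1}=\sum_{k=0}^{p}c^{p-k}\binom{p}{k}H^f_{2k+1}$ as a lower-triangular transform of $(H^f_{2k+1})_k$ with unit diagonal; the same inductive inversion ($H^M_1=H^f_1$, then solve successively) gives both implications. The only genuinely delicate step is the linear independence of the tube coefficients $g_p$ underlying (i)$\Leftrightarrow$(ii); the remaining equivalences are purely formal manipulations of triangular systems, so I expect the $r\to 0^+$ leading-order argument to be the one point requiring care.
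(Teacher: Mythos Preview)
Your proposal is correct and follows essentially the same route as the paper: the paper's argument (given in the paragraph immediately preceding the proposition) likewise reads off (i)$\Leftrightarrow$(ii) from the Weyl--Gray formula (\ref{W-G-tube}) and then invokes (\ref{ELeqn-real-sp-form}) and (\ref{rel-Hf-HM}) for the remaining equivalences. You have simply made explicit what the paper leaves implicit---the linear independence of the $g_p$ via the leading asymptotics $g_p(r)\sim r^{m+2p-1}$, and the inductive inversions of the two triangular systems---so there is no genuine difference in strategy.
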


Recall that a submanifold of a Riemannian manifold is called
\emph{austere} by Harvey and Lawson \cite{HL} if its principle
curvatures in any normal direction occur in oppositely signed pairs.
They showed, among other fundamental results on calibrated geometry,
austere submanifolds of Euclidean space are exactly those whose
co-normal bundle is special lagrangian and hence absolutely
minimizing. Except for the case of surfaces, austerity is much
stronger than minimality. Many examples and (partially)
classifications of austere submanifolds of Euclidean space have been
established by several authors, such as \cite{Br}, \cite{DF},
\cite{IST}, \emph{etc}. For minimal $3$-folds in different space
forms, \cite{CL} gives a local classification of the submanifolds
for which the equality in the DDVV inequality (also called the normal scalar
curvature inequality which was proved independently and differently by \cite{Lu} and \cite{GT})
holds everywhere and hence austere. Note that by the pointwise equality condition for the DDVV
inequality given by \cite{GT} (also discussed in \cite{Lu}), minimality together with this DDVV
equality is sufficient for austerity. It is worthy to mention that the classification problem of submanifolds attaining the DDVV equality everywhere still remains a rather interesting open problem (see \cite{GT2} for a brief introduction and \cite{Lu}, \cite{DT2} for some advances). As far as we compare austerity
with tubular minimality, we derive the following
\begin{prop}
Let $M^n$ be an $n$-dimensional austere submanifold of the real
space form $\mathbb{R}^{n+m}(c)$. Then $M$ is tubular minimal.
Moreover, each $2p$-th mean curvature satisfies
$(-1)^pK^f_{2p}\geq0$.
\end{prop}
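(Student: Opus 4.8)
The plan is to reduce both assertions to the behavior of the elementary symmetric functions of the shape operators as the normal direction varies over the unit sphere. For a unit normal $\xi=\sum_\alpha\xi^\alpha e_\alpha$, let $A_\xi$ be the shape operator with entries $\sum_\alpha\xi^\alpha\theta_{i\alpha}(e_j)$ and let $\sigma_k(A_\xi)$ denote its $k$-th elementary symmetric function. By Proposition~\ref{equiv-tubularmin}, $M$ is tubular minimal precisely when $H^f_{2p+1}=0$ for every $p$, so the first claim amounts to showing that austerity forces all odd mean curvature vectors to vanish, while the second is a pointwise sign statement for $K^f_{2p}$. Both will follow once the definitions are rewritten in terms of $\sigma_k(A_\xi)$.

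The key step I would establish is a pointwise integral--geometric representation. Expanding each relative curvature form by the Gauss equation $\Omega_{ij}(e_k,e_l)=\sum_\alpha(\theta_{i\alpha}(e_k)\theta_{j\alpha}(e_l)-\theta_{i\alpha}(e_l)\theta_{j\alpha}(e_k))$, the formulas (\ref{def-meancurv}) become totally antisymmetrized polynomials of degree $2p$ (resp.\ $2p+1$) in the second fundamental form, grouped into $p$ blocks each carrying a repeated normal index. I would show that, after contraction against the generalized Kronecker symbol, every perfect matching of the $2p$ normal indices contributes equally, since any such matching differs from the diagonal one by a simultaneous permutation of the upper and lower index slots, which leaves $\delta_{I_n}$ invariant. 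Summing the matchings against the Gaussian moments of the sphere then gives
\[
K^f_{2p}=b_{n,m,p}\int_{S^{m-1}}\sigma_{2p}(A_\xi)\,d\xi,\qquad
\langle H^f_{2p+1},e_\alpha\rangle=b'_{n,m,p}\int_{S^{m-1}}\xi^\alpha\,\sigma_{2p+1}(A_\xi)\,d\xi,
\]
with positive constants $b_{n,m,p},b'_{n,m,p}$; the cases $p=0,1$ and the hypersurface case $m=1$ serve as sanity checks. Equivalently, these identities are the local content of the Weyl--Gray tube expansion (\ref{W-G-tube}) (cf.\ \cite{Ge}): isolating the coefficient of $(\cos(r\sqrt c))^{n-2p}(\sin(r\sqrt c)/\sqrt c)^{m+2p-1}$ selects exactly $\int_{S^{m-1}}\sigma_{2p}(A_\xi)\,d\xi$, because integration over the normal sphere annihilates the odd-order symmetric functions.

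Granting these representations, the proposition is immediate. Austerity (in the sense of Harvey--Lawson \cite{HL}) means that for every $\xi$ the eigenvalues of $A_\xi$ occur in oppositely signed pairs, so its characteristic polynomial is even; hence $\sigma_{2p+1}(A_\xi)\equiv 0$, giving $H^f_{2p+1}=0$ for all $p$ and thus tubular minimality via Proposition~\ref{equiv-tubularmin}. Writing the eigenvalues as $\pm\mu_1,\dots,\pm\mu_k$ (with extra zeros when $n$ is odd) yields $\sigma_{2p}(A_\xi)=(-1)^p\sigma_p(\mu_1^2,\dots,\mu_k^2)$, so $(-1)^p\sigma_{2p}(A_\xi)\ge 0$ pointwise on the normal sphere; integrating and using $b_{n,m,p}>0$ gives $(-1)^pK^f_{2p}\ge 0$. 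The main obstacle is the pointwise identity in the second paragraph: one must verify that the antisymmetrization genuinely collapses all normal-index pairings to a single term and track the combinatorial factors to confirm the constants are positive, the remaining inputs being the elementary facts about $\sigma_k$ of $\pm$-paired spectra.
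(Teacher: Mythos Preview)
Your approach is essentially the same as the paper's: both reduce the claims to the integral representations $K^f_{2p}=c_{n,m,p}\int_{S^{m-1}}\sigma_{2p}(A_\xi)\,d\xi$ and $H^f_{2p+1}=c'_{n,m,p}\int_{S^{m-1}}\xi\,\sigma_{2p+1}(A_\xi)\,d\xi$ with positive constants, then use that austerity forces $\sigma_{2p+1}(A_\xi)\equiv 0$ and $(-1)^p\sigma_{2p}(A_\xi)\ge 0$. The only difference is that the paper simply quotes these integral formulas from \cite{Ge} rather than deriving them, so the combinatorial verification you flag as the ``main obstacle'' is already available in the literature and need not be reproved.
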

\begin{proof}
By the definition of austerity, we see that each odd order
elementary symmetric polynomial $M_{2p+1}(\xi)$ of the shape
operator $S_{\xi}$ with respect to any unit normal vector $\xi$ of
$M$ vanishes. Recalling that in \cite{Ge} it is proved that
\begin{equation*}
H^f_{2p+1}=\frac{2^{2p}\pi^pp!(m+2p)}{C_{m+2p-1}(2p+1)!}\int_{S^{m-1}(1)}\xi
M_{2p+1}(\xi)d\xi,
\end{equation*}
we get $H^f_{2p+1}=0$ for all $p=0,1,\ldots,[\frac{n}{2}]$, and
hence by Proposition \ref{equiv-tubularmin}, $M$ is tubular minimal.
Moreover, austerity implies that the $2p$-th elementary symmetric
polynomial $M_{2p}(\xi)$ of the shape operator $S_{\xi}$ has the
sign of $(-1)^p$, which then shows $(-1)^pK^f_{2p}\geq0$ by the
following integral formula (cf. \cite{Ge}):
\begin{equation*}
K^f_{2p}=\frac{2^{2p}\pi^pp!}{C_{m+2p-1}(2p)!}\int_{S^{m-1}}M_{2p}(\xi)
d\xi.
\end{equation*}
The proof is now complete.
\end{proof}

\begin{ack}
It is our great pleasure to thank Professor Zizhou Tang for his
guidance and support. Many thanks also to Professors Haizhong Li,
Jiagui Peng and Changping Wang for their helpful discussions and
useful suggestions during the preparation of this paper.
\end{ack}

\bibliographystyle{amsplain}

\end{document}